\theoremstyle{plain}
\newtheorem*{ack*}{Acknowledgment}
\newtheorem{te}{Theorem}[section]
\newtheorem{coro}[te]{Corollary}
\newtheorem{defn}[te]{Definition}
\newtheorem{lem}[te]{Lemma}
\newcommand{\dsum}{\displaystyle\sum}
\newcommand{\dint}{\displaystyle\int}
\newcommand{\dprod}{\displaystyle\prod}
\newcommand{\nocontentsline}[3]{}
\def\x{{\boldsymbol x}}
\def\y{{\bf y}}
\def\b{{\boldsymbol b}}
\def\g{{\boldsymbol g}}
\def\0{{\bf 0}}
\def\a{{\boldsymbol a}}
\def\b{{\boldsymbol b}}
\def\c{{\boldsymbol c}}
\def\m{{\boldsymbol m}}
\def\h{{\boldsymbol h}}
\def\y{{\boldsymbol y}}
\def\r{{\boldsymbol r}}
\def\R{{\mathbb R}}
\def\Q{{\mathbb Q}}
\def\N{{\mathbb N}}
\def\Z{{\mathbb Z}}
\def\P{{\mathbb P}}
\numberwithin{equation}{section}
\newcommand{\nmod}[1]{\,\,\text{\rm mod}\,\,#1}
\def\bfw{{\mathbf w}}
\def\bfx{{\mathbf x}}
\def\bfy{{\mathbf y}}
\def\bfz{{\mathbf z}}
\def\N{{\mathbb N}}  \def\P{{\mathbb P}}
\def\R{{\mathbb R}}
\def\Z{{\mathbb Z}}\def\Q{{\mathbb Q}}
\def\eps{\varepsilon}
\def\le{\leqslant} \def\ge{\geqslant}
\begin{document}

\author{Daniel Flores Galiote and Kiseok Yeon}

\email{flore205@purdue.edu}
\address{Department of Mathematics, Purdue University, 150 N. University Street, West Lafayette, IN 47907-2067}
\email{kyeon@ucdavis.edu}
\address{Department of Mathematics, University of California, Davis, One Shields Avenue, Davis, CA 95616, USA}
\subjclass[2020]{11E76,14G12}
\keywords{Homogenous polynomials, Hasse principle}

\title[Daniel Flores Galiote and Kiseok Yeon]{The Hasse principle for random homogeneous polynomials in thin sets}

\maketitle

\begin{center}
    \textit{In honor of Trevor Wooley's 60th birthday}
\end{center}

\begin{abstract}
Let $d$ and $n$ be natural numbers. Let $\nu_{d,n}: \mathbb{R}^n\rightarrow \mathbb{R}^{N}$ denote the Veronese embedding with $N=N_{n,d}:=\binom{n+d-1}{d}$, defined by listing all the monomials of degree $d$ in $n$ variables using the lexicographical ordering. Let $\langle \a, \nu_{d,n}(\boldsymbol{x})\rangle\in \Z[\boldsymbol{x}]$ be a homogeneous polynomial in $n$ variables of degree $d$ with integer coefficients $\boldsymbol{a}$, where $\langle\cdot,\cdot\rangle$ denotes the inner product. For a non-singular form $P\in \mathbb{Z}[\boldsymbol{x}]$ of degree $k\ (\leq d)$ in $N$ variables, consider a set of integer vectors $\boldsymbol{a}\in \mathbb{Z}^N$, defined by $$\mathfrak{A}(A;P)=\{\boldsymbol{a}\in \Z^N:\ P(\boldsymbol{a})=0,\ \|\boldsymbol{a}\|_{\infty}\leq A\}.$$ By handling a new lattice problem via the geometry of numbers, we confirm that whenever $n> 24d$ and $d\geq 17,$ the proportion of integer coefficients $\boldsymbol{a}\in \mathfrak{A}(A;P)$, whose associated equation $f_{\boldsymbol{a}}(\boldsymbol{x})=0$ satisfies the Hasse principle, converges to $1$ as $A\rightarrow\infty$. 
This improves on the recent work of the second author.
\end{abstract}


\section{Introduction and statement of the results}\label{sec:intro}

In this paper, we are concerned with the solubility of \emph{random} Diophantine equations. To avoid ambiguity, we begin by introducing some notation. Let $\nu_{d,n}: \R^n\rightarrow \R^N$ denote the \emph{Veronese} embedding with $N=N_{d,n}:=\binom{n+d-1}{d}$, defined by listing all the monomials of degree $d$ in $n$ variables using the lexicographical ordering. Let $f_{\a}(\x):=\langle \a, \nu_{d,n}(\x)\rangle\in \Z[\x]$ be a homogeneous polynomial in $n$ variables of degree $d$ with integer coefficients $\a$, where $\langle\cdot,\cdot\rangle$ denotes the inner product. Lastly, we define 
\begin{equation*}
    \mathfrak{A}(A):=\{\a\in \Z^N:\ \|\a\|_{\infty}\leq A\}.
\end{equation*}
Browning, Sawin, and Le Boudec in \cite{RefJ3} proved that the proportion of $\a\in\Z^N$ in $ \mathfrak{A}(A)$, whose associated equation $f_{\a}(\x)=0$ satisfies the Hasse principle, converges to $1$ as $A\rightarrow \infty,$ provided that $n\geq d+1$ and $d\geq 4.$ Furthermore, Poonen and Voloch \cite{RefJ14} proved that the proportion of $\a\in\Z^N$ in $ \mathfrak{A}(A)$, whose associated equation $f_{\a}(\x)=0$ is everywhere locally soluble, converges to a positive constant $c$ as $A\rightarrow \infty,$ provided that $n\geq 2$ and $d\geq 2.$ Hence, combining these two results, we conclude that a positive proportion of homogeneous polynomial equations in $n$ variables of degree $d$ are soluble over $\Q$, provided that $n\geq d+1$ and $d\geq 4. $ 

The recent works \cite{RefJ10111,RefJ10012} of Lee, Lee, and the second author showed that such a conclusion on global solubility remains true even when the coefficients $\a\in \Z^N$ are constrained by a polynomial condition under a modest condition on the number of variables. More precisely, let $P$ be a non-singular form in $n$ variables of degree $k\geq2$. Define 
\begin{equation*}
    \mathfrak{A}(A;P):=\{\a\in \Z^N:\ P(\a)=0,\ \|\a\|_{\infty}\leq A\}.
\end{equation*}
The second author \cite{RefJ10012} proved that the proportion of $\a\in \Z^N$ in $ \mathfrak{A}(A;P)$, whose associated equation $f_{\a}(\x)=0$ satisfies the Hasse principle, converges to $1$ as $A\rightarrow \infty,$ provided that $n\geq 32d+17$, $d\geq 14$ and $k\leq d.$  Furthermore,  Lee, Lee and the second author \cite{RefJ10111} showed that  the proportion of $\a\in\Z^N$ in $ \mathfrak{A}(A;P)$, whose associated equation $f_{\a}(\x)=0$ is everywhere locally soluble, converges to a constant $c_P$ as $A\rightarrow \infty,$ provided that  $n\geq 2$, $d\geq 2$ and $k\leq C(n,d)$ for some constant $C(n,d)$. This constant $c_P$ is positive if there exists $\a\in \mathfrak{A}(A;P) $ such that $f_{\a}(\x)=0$ has an integer solution $\x\in \Z^n$ with $\nabla f_{\a}(\x)\neq \0.$

The main purpose of this paper is to show that such a conclusion still holds even when $n > 24d$, $d \ge 17$ and $k\leq d$. The crucial ingredient for this improvement is to handle a new lattice counting problem (see Lemma $\ref{lemma for a new lattice counting problem}$ below) which naturally arises in the argument. In the proof of [\ref{ref10012}, Lemma 3.6] we encounter exponential sums which lead to this same lattice counting problem, however, we simply used a trivial bound previously. In this paper, we obtain a much sharper upper bound utilizing tools from the geometry of numbers. Our hope is that the procedure described in this paper will inspire further improvements in the future.

In order to describe our main theorems, we temporarily pause here and provide some definitions. Recall that $f_{\a}(\x)$ is a homogeneous polynomial in $n$ variables of degree $d.$ Furthermore, for $\a\in \Z^N$ and $X>0$, we define 
\begin{equation*}
    \mathcal{I}_{\a}(X):=\{\x\in [1,X]^n\cap \Z^n:\ f_{\a}(\x)=0\}.
\end{equation*}
We note here that our argument proceeds for fixed $X>0$, and thus for simplicity, we write
\begin{equation}\label{def2.2}
    w=\log X
\end{equation}
and \begin{equation}\label{def2.3}
    W=\dprod_{p\leq w}p^{\lfloor\log w/\log p\rfloor}.
\end{equation}
Observe here that an application of the prime number theorem reveals that $\log W \leq 2w,$ which implies 
\begin{equation}
    W\leq X^{2}.
\end{equation}

For $L>0$ and $\a\in \Z^N,$ we define
\begin{equation}\label{def6.1}
    \sigma(\a;L)=L^{-(n-1)}\#\{\g\in [1,L]^n:\ f_{\a}(\g)\equiv 0\ \text{mod}\ L\}.
\end{equation}
We notice that by the Chinese remainder theorem one has
\begin{equation}\label{6.26.26.2}
    \sigma(\a;L)=\dprod_{p^r\|L}\sigma(\a;p^r).
\end{equation}
Then, on recalling the definition $(\ref{def2.3})$ of $W$, we write
\begin{equation}
    \mathfrak{S}_{\a}^*=\sigma(\a;W)=\dprod_{p^r\|W}\sigma(\a;p^r).
\end{equation}

Recall the definition $(\ref{def2.2})$ of $w.$ Put $\zeta=w^{-4-1/(8d)},$ and we introduce an auxiliary function 
$$\mathfrak{w}_{\zeta}(\beta)=\zeta\cdot\left(\frac{\text{sin}(\pi \zeta\beta)}{\pi \zeta\beta}\right)^2.$$
Note here that we chose $\zeta$ differently from \cite{RefJ10012} in order to optimize the result. 
This function has the Fourier transform
$$\widehat{\mathfrak{w}}_{\zeta}(\xi)=\int_{-\infty}^{\infty}\mathfrak{w}_{\zeta}(\beta)e(-\beta \xi)d\beta=\text{max}\{0,1-|\xi|/\zeta\}.$$
For $\a\in \Z^N$ and $A,X>0,$ we define 
\begin{equation}\label{defnJ*}
    \mathfrak{J}_{\a}^*:= \mathfrak{J}_{\a}^*(A,X)=A^{-1}X^{n-d}\dint_{[0,1]^n}\zeta^{-1} \widehat{\mathfrak{w}}_{\zeta}(A^{-1}f_{\a}(\boldsymbol{\gamma}))d\boldsymbol{\gamma}.
\end{equation}

\begin{defn}\label{defn4.2}
Let $n$ and $d$ be natural numbers with $d\geq 2.$ Consider the monomials of degree $d$ in $n$ variables $x_1,\ldots, x_n$. In particular, the number of these monomials is $N=\binom{n+d-1}{d}.$ Then, define $v_d(\x)\in \R^n$ and $w_d(\x)\in \R^{N-n}$ to be vectors associated with those monomials such that $(v_d(\x))_i$ is $x_i^d$ with $i=1,\ldots,n$ and $(w_d(\x))_j$'s are remaining monomials in lexicographical order with $j=1,\ldots, N-n$, respectively.
\end{defn}
For example, we find that 
$$v_3(x_1,x_2)=(x_1^3,x_2^3)\ \text{and}\ w_3(x_1,x_2)=(x_1^2x_2,x_1x_2^2).$$
Recall the definition of $f_{\a}(\x)$, that is $f_{\a}(\x)=\langle\a,\nu_{d,n}(\x)\rangle$. Let us define a permutation  $[\ \cdot\ ]:\ \Z^N\rightarrow \Z^N$ in the following way:   for a given $(\b,\c)\in \Z^N$ with $\b\in \Z^n$ and $\c\in \Z^{N-n}$,  the permutation $[\ \cdot\ ]$ is mapping $(\b,\c)\in \Z^N$ to $\a\in \Z^N$ such that
\begin{equation}\label{[]definition}
    f_{\a}(\x)=\langle\b,v_d(\x)\rangle+\langle\c,w_d(\x)\rangle.
\end{equation}

\begin{te}\label{thm2.2}
Let $A$ and $X$ be positive numbers and let $n$ and $d$ be natural numbers with $d\geq 4.$ Let $n=8s+r$ with $s\in \N$ and $1\leq r\leq 8.$ Suppose that $s\geq 3d$ and $X^{2d}\leq A\leq X^{s-d}.$ Suppose that $P\in \Z[\x]$ is a non-singular form of degree $k$ in $N_{d,n}$ variables. Then, whenever $N_{d,n}\geq 200k(k-1)2^{k-1}$, there is a positive number $\delta<1$ such that
\begin{equation*}
    \dsum_{\substack{\|\a\|_{\infty}\leq A\\P(\a)=0}}\left|\mathcal{I}_{\a}(X)-\mathfrak{S}_{\a}^*\mathfrak{J}_{\a}^*\right|^2\ll A^{N-4}X^{2n-2d}(\log A)^{-\delta}.
\end{equation*}
\end{te}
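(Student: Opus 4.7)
The proof runs through a Fourier-analytic version of the circle method combined with a variance estimate over coefficient vectors. First, I would use the Fej\'er kernel $\mathfrak{w}_\zeta$ to represent the characteristic function of $\{f_{\a}(\x)=0\}$: since $f_{\a}(\x)\in\Z$ and $\zeta<1$, the identity $\mathbb{1}[f_{\a}(\x)=0]=\widehat{\mathfrak{w}}_\zeta(f_{\a}(\x))$ is exact, so
\[
\mathcal{I}_{\a}(X) \;=\; \int_{-\infty}^{\infty}\mathfrak{w}_\zeta(\beta)\,T_{\a}(-\beta)\,d\beta, \qquad T_{\a}(\alpha):=\sum_{\x\in [1,X]^n\cap\Z^n}e(\alpha f_{\a}(\x)).
\]
I would then split the $\beta$-integral into major arcs (rationals $a/q$ with $q\le w$, coupled to the $W$-trick via $(\ref{def2.3})$) and a complementary minor-arc set $\mathfrak{m}$. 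On the major arcs, a classical analysis using Poisson summation in $\x$, the multiplicativity $(\ref{6.26.26.2})$, and the linearity of $f_{\a}$ in $\a$ extracts the main term $\mathfrak{S}_{\a}^*\mathfrak{J}_{\a}^*$ via $(\ref{defnJ*})$. This part is essentially the same as in the earlier work [\ref{ref10012}] and, once summed against $\a\in\mathfrak{A}(A;P)$, contributes only an admissible error.

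\textbf{Minor arc variance.} The central task is therefore to bound the second moment of the minor-arc contribution $E_{\a}:=\int_{\mathfrak{m}}\mathfrak{w}_\zeta(\beta)T_{\a}(-\beta)\,d\beta$ summed over $\a\in\mathfrak{A}(A;P)$. Expanding $|E_{\a}|^2$, interchanging summations, and crucially exploiting that $f_{\a}(\x)=\langle \a,\nu_{d,n}(\x)\rangle$ is linear in $\a$, one is led to an expression of the form
\[
\int_{\mathfrak{m}}\int_{\mathfrak{m}}\mathfrak{w}_\zeta(\beta_1)\mathfrak{w}_\zeta(\beta_2)\sum_{\x,\y\in[1,X]^n\cap\Z^n}\Phi_A\bigl(-\beta_1\nu_{d,n}(\x)+\beta_2\nu_{d,n}(\y)\bigr)\,d\beta_1\,d\beta_2,
\]
where $\Phi_A(\bftet):=\sum_{\a\in\Z^N,\,P(\a)=0,\,\|\a\|_\infty\le A}e(\langle\a,\bftet\rangle)$. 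The problem thereby reduces to a uniform estimate for the exponential sum $\Phi_A(\bftet)$ over the thin set $\{P=0\}\cap\Z^N$, followed by the usual Weyl-type treatment of the $(\x,\y)$-sum of Veronese phases under the hypotheses $n=8s+r$, $s\ge 3d$, and $X^{2d}\le A\le X^{s-d}$.

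\textbf{The lattice counting step and main obstacle.} To control $\Phi_A(\bftet)$, I would invoke Lemma $\ref{lemma for a new lattice counting problem}$: for each rational approximation of $\bftet$, the phase $e(\langle \a,\bftet\rangle)$ is essentially constant on cosets of a sublattice $\Lambda\subset\Z^N$, reducing matters to bounding $\#\{\a\in\Lambda:P(\a)=0,\,\|\a\|_\infty\le A\}$. Here I would use geometry-of-numbers tools — Davenport's lemma on counting lattice points in convex bodies, successive minima of $\Lambda$, and Birch's theorem applied to $P$ restricted to cosets (valid because $N\ge 200k(k-1)2^{k-1}$) — to obtain a bound on this count that is explicit in the successive minima of $\Lambda$. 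This lattice count is precisely the improvement over [\ref{ref10012}] and is the hard part of the argument: one needs a bound sharp enough that, after integration over $(\beta_1,\beta_2)\in\mathfrak{m}^2$ and summation over $(\x,\y)$, the losses from the Diophantine approximation of $\bftet$ are absorbed by the gains from the $8s+r$-variable Weyl sums. The optimization of $\zeta=w^{-4-1/(8d)}$ in the Fej\'er kernel then produces the final logarithmic saving $(\log A)^{-\delta}$.
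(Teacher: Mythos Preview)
Your high-level plan (circle method, split into major and minor arcs, expand the second moment, exploit linearity of $f_{\a}$ in $\a$) is sound and matches the paper's overall structure. However, the central paragraph on the ``lattice counting step'' misidentifies both what Lemma~\ref{lemma for a new lattice counting problem} says and where it enters the argument, and this is a genuine gap.

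The lemma you cite does \emph{not} bound $\#\{\a\in\Lambda:P(\a)=0,\ \|\a\|_\infty\le A\}$ for sublattices $\Lambda\subset\Z^N$. It has nothing to do with $P$ at all: it bounds the number $N(A,X)$ of solutions to the purely diagonal system
\[
\sum_{i=1}^{s}a_i(x_i^d-z_i^d)=\sum_{i=1}^{s}a_i(y_i^d-w_i^d)=0,
\]
and the geometry-of-numbers input is used on the rank-$2$ lattice in $\a$-space orthogonal to the two vectors $(x_i^d-z_i^d)_i$ and $(y_i^d-w_i^d)_i$. In the paper this arises as the mean value $\int_{[0,1)^2}W(\alpha_1,\alpha_2)^s\,d\alpha_1 d\alpha_2$ in the proof of Lemma~\ref{lem4.6}, \emph{after} the $P$-constraint has already been removed.

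The step you are missing is the decoupling of the $P$-constraint from the Veronese phase. In the paper this is Lemma~\ref{lemma 2.3}: a Cauchy--Schwarz/H\"older manoeuvre that splits the second moment into a factor $G_1(\alpha_1,\alpha_2)$ carrying only the phase $\langle\h,\nu_{d,n}(\x)\rangle$ and a factor $G_2(\beta)$ carrying only $P(\a+\h)-P(\a)$. The $P$-part $\int_0^1 G_2(\beta)^{1/4}d\beta$ is then handled by the \emph{existing} Birch-type input from [\ref{ref10012}, Lemma~2.10] under the hypothesis $N\ge 200k(k-1)2^{k-1}$; no new lattice argument is needed there. The new lattice lemma is applied only to $G_1$, which after a further Cauchy--Schwarz becomes the diagonal mean value above. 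Your proposal to keep $\Phi_A(\bftet)$ intact and bound lattice points on $\{P=0\}$ directly would require a quite different (and unproved) input, and in any case is not what Lemma~\ref{lemma for a new lattice counting problem} supplies.
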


\bigskip

Recall the definition of $N:=N_{d,n}$. In advance of the statement of the following theorem, define a set $\mathcal{A}^{\text{loc}}_{d,n}(A;P)$ of integer vectors $\a\in \Z^N$ in $\mathfrak{A}(A;P)$ having the property that the associated equation  $f_{\a}(\x)=0$ is everywhere locally soluble.  
\begin{te}\label{thm2.3}
Let $A$ and $X$ be positive numbers with $X^3\leq A.$ Suppose that $n$ and $d$ are natural numbers with $n>d+1$ and $d\geq 2$. Suppose that $P\in \Z[\x]$ is a non-singular form of degree $k$ in $N_{d,n}$ variables. Then, whenever $N_{d,n}\geq 1000n^28^k$, one has
\begin{equation*}
    \#\left\{\a\in \mathcal{A}^{\text{loc}}_{d,n}(A;P):\ \begin{aligned}
        \mathfrak{S}^*_{\a}\mathfrak{J}_{\a}^*\leq X^{n-d}A^{-1}(\log A)^{-\eta}
    \end{aligned}\right\}\ll A^{N-2}\cdot(\log A)^{-\eta/(40n)},
\end{equation*}
for any $\eta>0.$
\end{te}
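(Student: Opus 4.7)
The plan is to decompose the set in question via the inclusion
\[
\{\a \in \mathcal{A}^{\text{loc}}_{d,n}(A;P):\ \mathfrak{S}^*_{\a}\mathfrak{J}_{\a}^* \leq X^{n-d}A^{-1}(\log A)^{-\eta}\} \subseteq \mathcal{E}_{\infty} \cup \mathcal{E}_{\text{fin}},
\]
where $\mathcal{E}_{\infty} := \{\a \in \mathfrak{A}(A;P):\ \mathfrak{J}_{\a}^* \leq X^{n-d}A^{-1}(\log A)^{-\eta/2}\}$ and $\mathcal{E}_{\text{fin}} := \{\a \in \mathcal{A}^{\text{loc}}_{d,n}(A;P):\ \mathfrak{S}^*_{\a} \leq (\log A)^{-\eta/2}\}$, and then to bound each of $\#\mathcal{E}_{\infty}$ and $\#\mathcal{E}_{\text{fin}}$ by $O(A^{N-2}(\log A)^{-\eta/(40n)})$. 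The theorem then follows by a union bound.

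\textbf{Controlling $\#\mathcal{E}_{\infty}$.} I would compute the first and second moments of $\mathfrak{J}^*_{\a}$ averaged over $\a \in \mathfrak{A}(A;P)$. Unfolding the definition \eqref{defnJ*} and interchanging the $\a$-sum with the $\gamma$-integration, the first moment reduces to handling $\sum_{\a \in \mathfrak{A}(A;P)} \widehat{\mathfrak{w}}_{\zeta}(A^{-1}f_{\a}(\gamma))$. Because $f_{\a}(\gamma) = \langle \a, \nu_{d,n}(\gamma)\rangle$ is linear in $\a$ for fixed $\gamma$, after representing $\widehat{\mathfrak{w}}_{\zeta}$ via its inverse Fourier transform the inner sum becomes a complete exponential sum with phase of degree $k$ in $\a$. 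The constraint $P(\a)=0$ is then treated by Birch's form of the circle method, applicable since $N_{d,n} \geq 1000 n^2 8^k$, yielding $\sum_{\a \in \mathfrak{A}(A;P)} \mathfrak{J}_{\a}^* \asymp X^{n-d}A^{-1} \cdot \#\mathfrak{A}(A;P)$. A parallel (but more delicate) second-moment computation controls the variance, and Chebyshev's inequality bounds $\#\mathcal{E}_{\infty}$.

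\textbf{Controlling $\#\mathcal{E}_{\text{fin}}$.} I would use the factorization $\mathfrak{S}^*_{\a}=\prod_{p \leq w}\sigma(\a;p^{e_p})$ with $e_p=\lfloor\log w/\log p\rfloor$; local solubility makes every factor positive. The condition $\mathfrak{S}^*_{\a}\leq(\log A)^{-\eta/2}$ rewrites as $L(\a) := \sum_{p \leq w}-\log\sigma(\a;p^{e_p}) \geq (\eta/2)\log\log A$, and a high-moment Markov argument with exponent $m \asymp n$ reduces the task to bounding $\sum_{\a \in \mathfrak{A}(A;P)} L(\a)^{m}$. By multinomial expansion this splits into sums over tuples of primes; for each prime $p$ and threshold $T > 0$, the count of $\a \in \mathfrak{A}(A;P)$ with $\sigma(\a;p^{e_p}) \leq p^{-T}$ is controlled by a lattice-point count on the exceptional sub-variety of $(\Z/p^{e_p}\Z)^N$ where $f_{\a}$ admits anomalously few mod-$p^{e_p}$ zeros, intersected with $\{P=0\}$; the latter intersection is handled by Birch-type lattice counting.

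\textbf{Main obstacle.} The crux is the non-archimedean step, specifically obtaining uniform control across all primes $p \leq w = \log X$ of the lattice count on $\{P=0\} \cap \{\a \bmod p^{e_p} \in \text{exceptional locus}\}$. Without a sharp such count the Markov argument loses too much and cannot produce the exponent $\eta/(40n)$. This is precisely where the refined lattice-counting ingredient introduced by the authors (the new lemma alluded to in the introduction) plays the decisive role; once that input is secured, the two bounds combine by a union bound to yield the claim.
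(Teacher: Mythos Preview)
Your high-level decomposition into an archimedean piece $\mathcal{E}_\infty$ and a non-archimedean piece $\mathcal{E}_{\mathrm{fin}}$, followed by moment/Markov arguments, is a plausible route and is broadly in the spirit of what the earlier paper [\ref{ref10012}] does in its Proposition~5.12 and Section~6. However, you have misidentified the role of the new lattice-counting lemma. The paper's own proof of this theorem consists of a single observation: the statement is identical to [\ref{ref10012}, Theorem~1.2] except that $\zeta$ has been changed from the value used there to $\zeta=w^{-4-1/(8d)}$, and this cosmetic change does not affect the argument of [\ref{ref10012}, Proposition~5.12] or [\ref{ref10012}, Section~6]. No new input is required.

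The new lattice problem (Lemma~\ref{lemma for a new lattice counting problem}) is \emph{not} used here at all; it is the key input for Theorem~\ref{thm2.2} (the variance estimate for $\mathcal{I}_{\a}(X)-\mathfrak{S}^*_{\a}\mathfrak{J}^*_{\a}$), entering through the mean value $\int W(\alpha_1,\alpha_2)^s\,d\alpha_1\,d\alpha_2$ in Section~\ref{sec4}. Your claim that ``the refined lattice-counting ingredient introduced by the authors \ldots\ plays the decisive role'' in the non-archimedean step of \emph{this} theorem is therefore incorrect. The bound on $\#\mathcal{E}_{\mathrm{fin}}$ in [\ref{ref10012}] already relies only on Birch-type counting for $P(\a)=0$ intersected with congruence conditions, and needs nothing beyond what was available in that earlier work; the condition $N_{d,n}\ge 1000n^2 8^k$ is exactly the hypothesis carried over from there, not a consequence of Lemma~\ref{lemma for a new lattice counting problem}.
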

\begin{proof}
The only difference between Theorem \ref{thm2.3} and \cite[Theorem 1.2]{RefJ10012} is the choice of $\zeta.$ One readily sees that the choice of $\zeta=w^{-4-1/(8d)}$ does not harm the argument in \cite[Proposition 5.12]{RefJ10012}, and thus  \cite[Proposition 5.12]{RefJ10012} still holds with $\zeta=w^{-4-1/(8d)}$. Therefore,  we see by \cite[Section 6]{RefJ10012} that \cite[Theorem 1.2]{RefJ10012} holds with $\zeta=w^{-4-1/(8d)}$.
\end{proof}
\begin{te}\label{thm1.3}
Let $A$ and $X$ be positive numbers.  Suppose that  $A,X,n$ and $d$  satisfy the hypotheses in Theorem $\ref{thm2.2}$ and $\ref{thm2.3}$. Suppose that $P\in \Z[\x]$ is a non-singular form of degree $k$ in $N_{d,n}$ variables.  Then,  the proportion of integer vectors $\a\in \mathcal{A}_{d,n}^{\text{loc}}(A;P)$ in $\mathfrak{A}(A;P)$, having the property that $$ \mathcal{I}_{\a}(X)<A^{-1}X^{n-d}(\log A)^{-1/5},$$ converges to $0$ as $A\rightarrow \infty.$
\end{te}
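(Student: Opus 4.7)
The plan is to combine Theorems \ref{thm2.2} and \ref{thm2.3} by a standard Chebyshev-type dichotomy. Write
$$\mathcal{B}(A,X)=\bigl\{\a\in\mathcal{A}_{d,n}^{\text{loc}}(A;P):\ \mathcal{I}_{\a}(X)<A^{-1}X^{n-d}(\log A)^{-1/5}\bigr\},$$
so that the goal is to show $|\mathcal{B}(A,X)|/|\mathfrak{A}(A;P)|\to 0$ as $A\to \infty$. Fix an auxiliary parameter $\eta>0$ with $\eta<1/5$ and $2\eta<\delta$, where $\delta<1$ is the exponent supplied by Theorem \ref{thm2.2}; any $\eta\in \bigl(0,\min(1/5,\delta/2)\bigr)$ suffices, and one can equalize the two log-savings below by a marginal optimization.

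Decompose $\mathcal{B}(A,X)=\mathcal{B}_1\cup\mathcal{B}_2$ according to the size of the predicted main term:
$$\mathcal{B}_1=\bigl\{\a\in\mathcal{B}(A,X):\ \mathfrak{S}^*_{\a}\mathfrak{J}^*_{\a}\le X^{n-d}A^{-1}(\log A)^{-\eta}\bigr\},\qquad \mathcal{B}_2=\mathcal{B}(A,X)\setminus\mathcal{B}_1.$$
Theorem \ref{thm2.3} bounds $|\mathcal{B}_1|\ll A^{N-2}(\log A)^{-\eta/(40n)}$ directly. For $\mathcal{B}_2$, each element satisfies
$$\mathfrak{S}^*_{\a}\mathfrak{J}^*_{\a}-\mathcal{I}_{\a}(X)>X^{n-d}A^{-1}\bigl((\log A)^{-\eta}-(\log A)^{-1/5}\bigr)\gg X^{n-d}A^{-1}(\log A)^{-\eta},$$
where the final step uses $\eta<1/5$ and $A$ large. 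Squaring this pointwise lower bound, summing over $\mathcal{B}_2$, and invoking the second moment estimate of Theorem \ref{thm2.2} yields, via Chebyshev's inequality,
$$|\mathcal{B}_2|\,X^{2n-2d}A^{-2}(\log A)^{-2\eta}\ll A^{N-4}X^{2n-2d}(\log A)^{-\delta},$$
so $|\mathcal{B}_2|\ll A^{N-2}(\log A)^{2\eta-\delta}$. Since $2\eta<\delta$, the sum of the two bounds gives $|\mathcal{B}(A,X)|\ll A^{N-2}\bigl\{(\log A)^{-\eta/(40n)}+(\log A)^{2\eta-\delta}\bigr\}=o(A^{N-2})$, and dividing by $|\mathfrak{A}(A;P)|$ produces the claimed vanishing of the proportion.

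The main obstacle here is purely bookkeeping: verifying that the hypotheses of Theorems \ref{thm2.2} and \ref{thm2.3} are simultaneously satisfiable under those of Theorem \ref{thm1.3}. The constraint $X^{2d}\le A$ from the former forces the $X^3\le A$ demanded by the latter whenever $d\ge 2$, and the combined rank condition becomes $N_{d,n}\ge \max\bigl(200k(k-1)2^{k-1},\,1000n^28^k\bigr)$, which is implicit in the hypotheses of Theorem \ref{thm1.3}. Otherwise, the argument mirrors that of [\ref{ref10012}], with the improvements of the present paper entering only through the sharper choice $\zeta=w^{-4-1/(8d)}$ and the strengthened second moment bound of Theorem \ref{thm2.2}, which is exactly what allows the exponent $\delta<1$ to be positive in the first place.
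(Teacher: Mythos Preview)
Your Chebyshev dichotomy is exactly the argument the paper invokes by deferring to [\ref{ref10012}, Theorem 1.3], and the splitting into $\mathcal{B}_1,\mathcal{B}_2$ with a parameter $\eta\in(0,\min(1/5,\delta/2))$ is correct. The only step you leave implicit is the lower bound $|\mathfrak{A}(A;P)|\gg A^{N-k}$ (supplied by Birch's theorem under the rank hypothesis) needed for the final division to yield a vanishing proportion; note that it is the sharper exponent $A^{N-k-2}$ actually obtained in the proof of Theorem~\ref{thm2.2}, rather than the stated $A^{N-4}$, that makes this division succeed for general $k\ge 2$.
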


\begin{proof}
 See the proof of \cite[Theorem 1.3]{RefJ10012}.
\end{proof}

\begin{coro}\label{coro1.5}
The conclusions of Theorem $\ref{thm2.2}$, $\ref{thm2.3}$ and $\ref{thm1.3}$ hold for $d\geq 17,$ $k\leq d$ and $n> 24d$ in place of the hypotheses on $n,d$ and $k$.
\end{coro}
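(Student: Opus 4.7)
The plan is to deduce Corollary~\ref{coro1.5} from Theorems~\ref{thm2.2}, \ref{thm2.3}, and \ref{thm1.3} by direct verification of hypotheses: I show that the simpler conditions $d\geq 17$, $k\leq d$, and $n>24d$ imply every numerical requirement on $(n,d,k)$ in those theorems, so the conclusions carry over. The hypotheses on $A$, $X$, and $P$ are inherited unchanged. Four numerical conditions must be checked: (i)~$d\geq 4$ and $n>d+1$; (ii)~$s\geq 3d$, where $n=8s+r$ with $1\leq r\leq 8$; (iii)~$N_{d,n}\geq 200k(k-1)2^{k-1}$; and (iv)~$N_{d,n}\geq 1000n^2 8^k$. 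Items (i) and (ii) are essentially immediate: $d\geq 17\geq 4$ and $n>24d>d+1$ give (i), while $n>24d$ with $n,d\in\N$ forces $n\geq 24d+1$, so $s=\lfloor(n-1)/8\rfloor\geq \lfloor 24d/8\rfloor=3d$, establishing (ii).

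The main work lies in (iii) and (iv). Since $k\leq d$ and $n\geq 1$, (iv) easily dominates (iii), so it suffices to prove $N_{d,n}\geq 1000n^2 8^d$. Using $\binom{n+d-1}{d}\geq n^d/d!$ together with Stirling's upper bound $d!\leq e\sqrt{d}\,(d/e)^d$, this reduces at the boundary $n=24d+1$ to the inequality
\[
d(\log 3 + 1) \;\geq\; 2\log 24 + \tfrac{5}{2}\log d + \log 1000 + 1,
\]
which is verified at $d=17$ (LHS $\approx 35.7$, RHS $\approx 21.4$) with ample slack, and whose slack grows linearly in $d$. To promote this from the boundary value $n=24d+1$ to every $n>24d$, observe that for $d\geq 3$ one has
\[
\frac{\binom{n+d}{d}}{\binom{n+d-1}{d}}\cdot\frac{n^2}{(n+1)^2}=\frac{n(n+d)}{(n+1)^2}\geq 1,
\]
so the ratio $\binom{n+d-1}{d}/n^2$ is monotone increasing in $n$, and the required inequality persists throughout the range.

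The one point I would flag is that the crude estimate $d!\leq d^d$ is not quite sharp enough to secure (iv) at the boundary case $d=17$; with that weaker bound one only recovers $d\geq 18$. The Stirling refinement $d!\leq e\sqrt{d}\,(d/e)^d$ is precisely what makes the $d\geq 17$ threshold attainable, matching the abstract's claim. Apart from this short numerical check, the argument is an entirely routine chain of verifications, after which Theorems~\ref{thm2.2}, \ref{thm2.3}, and \ref{thm1.3} apply without modification to yield Corollary~\ref{coro1.5}.
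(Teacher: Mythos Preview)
Your proof is correct and follows the same overall strategy as the paper: verify directly that the hypotheses $d\geq 17$, $k\leq d$, $n>24d$ imply every numerical condition in Theorems~\ref{thm2.2}--\ref{thm1.3}. The only difference is in the elementary binomial estimate chosen. The paper uses the lower bound $\binom{n+d-1}{d}\geq\bigl(\tfrac{n+d-1}{d}\bigr)^d$; since $n>24d$ gives $n+d-1\geq 25d$, this reduces condition~(iv) to the single inequality $1000\cdot 8^d\leq 25^{d-2}/d^2$, checked once at $d=17$, and the factor $n^2$ is absorbed via $n^2\leq(n+d-1)^2$. That choice handles all $n>24d$ simultaneously, so neither Stirling nor your monotonicity-in-$n$ step is needed. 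Your route via $\binom{n+d-1}{d}\geq n^d/d!$ together with $d!\leq e\sqrt{d}(d/e)^d$ is equally valid, just slightly longer because it separates the boundary case $n=24d+1$ from the extension to larger~$n$.
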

    \begin{proof}
    It suffices to show that the conditions $d\geq 17,$ $k\leq d$ and $n>24d$ imply the hypotheses on $n,d$ and $k$ in Theorem $\ref{thm2.2},$ $\ref{thm2.3}$ and $\ref{thm1.3}.$
   For $d\geq 17,$ a modicum of computation reveals that we have
    $$1000\cdot 8^{d}\leq \frac{25^{d-2}}{d^2}.$$
Then, we see that whenever $d\geq 17$ and $n> 24d$, we obtain
    $$1000\cdot 8^{d}\leq \frac{1}{d^2}\cdot \left(\frac{n+d-1}{d}\right)^{d-2}.$$
  Hence, it follows that whenever $k\leq d$ one has
    \begin{equation*}
        \begin{aligned}
            1000\cdot n^2\cdot 8^k\leq 1000\cdot 8^{d} \cdot(n+d-1)^2\leq \left(\frac{n+d-1}{d}\right)^d\leq \binom{n+d-1}{d}=N_{d,n}.
        \end{aligned}
    \end{equation*}
    Furthermore, it implies that $N_{d,n}\geq 200k(k-1)2^{k-1}.$ Plus, if one writes $n=8s+r$ with $1\leq r\leq 8$, one sees that $s\geq 3d$ since $n>24d.$
    \end{proof}

Therefore, Theorem \ref{thm1.3} implies that the proportion of integer vectors $\a\in \Z^N$ in $\mathfrak{A}(A;P),$ whose associated equation $f_{\a}(\x)=0$ satisfies the Hasse principle, converges to $1$.
Meanwhile, by \cite{RefJ10111}, one finds that the proportion of integer vectors $\a\in \Z^N$ in $\mathfrak{A}(A;P)$, whose associated equation $f_{\a}(\x)=0$ is everywhere locally soluble, converges to a constant $c_P$ as $A\rightarrow \infty.$ Moreover, for each place of $v$ of $\mathbb{Q}$, if there exists a non-zero $\b_v\in \Q_v^N$ such that $P(\b_v)=0$ and the equation $f_{\b_v}(\x)=0$ has a solution $\x\in \Q_v^n$ satisfying $\nabla f_{\b_v}(\x)\neq \boldsymbol{0}$ (see also \cite{RefJ9} for $k=2$), the constant $c_P$ is positive. Combining this with Theorem $\ref{thm1.3}$, we conclude that the proportion of integer vectors $\a\in \Z^N$ in $\mathfrak{A}(A;P),$ whose associated equation $f_{\a}(\x)=0$ has a solution $\x\in \Q^n$, converges to $c_P$.

\subsection{Notation}

In this paper, we use bold symbol to denote vectors, and we write vectors by row vectors. For a given vector $\boldsymbol{v}\in \R^N$, we write $i$-th coordinate of $\boldsymbol{v}$ by $(\boldsymbol{v})_i$ or $v_i.$ Given that $n_1,n_2,\ldots,n_s\in \N$ with $n_1+n_2+\cdots+n_s=n$, we use notation $\x=(\x_1,\x_2,\ldots,\x_s)\in \R^n$ with $\x_1\in \R^{n_1}$, $\x_2\in \R^{n_2},$ $\ldots \x_s\in \R^{n_s}$ where $\x_1$ is a vector formed by the first $n_1$ coordinates of $\x$ and $\x_2$ is a vector formed by the next $n_2$ coordinates of $\x$ and so on. We use notation $\x^{(1)},\x^{(2)},\ldots,\x^{(l)}$ for 
$$\left(\dsum_{\x}\right)^s=\dsum_{\x^{(1)},\ldots,\x^{(l)}}.$$ 
We write $0\leq \x\leq X$ to abbreviate the condition $0\leq x_1,\ldots,x_s\leq X.$ Also, we preserve the summation conditions until different conditions are specified.

\subsection*{Acknowledgement}
Daniel Flores Galiote acknowledges the support of Purdue University, which provided funding for this research through the Ross-Lynn Research Scholar Fund. Kiseok Yeon is supported by the KAP allocation from the University of California, Davis. The authors thank Wooley for having us as his students and for his mentorship throughout the years. The authors also thank all of his contributions to the field of number theory.  Furthermore, the authors thank anonymous referees for providing helpful comments, which have improved the exposition of the paper.

\bigskip

\section{Auxiliary lemmas}\label{sec2}
In this section, we record some results from the geometry of numbers, and some results in $[\ref{ref10012}].$
Let $\Lambda$ be a sublattice of $\Z^n$ of rank $r$, and let $\b_1,\ldots,\b_r$ be its basis. Denote by $d(\Lambda)$ the determinant of the lattice $\Lambda$. It follows by \cite[Lemma IV.6A and 6D]{RefJ1445} that
\begin{equation*}
    d(\Lambda)^2=\dsum_{I}(\text{det}B_I)^2,
\end{equation*}
where $I$ runs over $r$-element subsets of $\{1,2,\ldots, n\},$ and $B_I$ denotes the $r\times r$-minor with rows indexed in $I$ of the matrix $B=(\b_1,\ldots,\b_r)$ formed with columns $\b_j.$
Define the orthogonal lattice $\Lambda^{\perp}:=\{\x\in \Z^n:\ \langle\x,\b_i\rangle=0\ (1\leq i\leq r)\}.$
 Define $$G(\Lambda):= \gcd_{I}\ \det B_I.$$ Then, we find from \cite[Lemma 2.1]{RefJ2} that
\begin{equation}
    d(\Lambda^{\perp})=d(\Lambda)/G(\Lambda).
\end{equation}

\bigskip

\begin{lem}\label{lemma for geometry of numbers 2}
    Let $\Lambda$ be a sublattice of rank $r$ in $\Z^n.$ Let $A\geq d(\Lambda).$ Then, the box $|\a|\leq A$ contains $O(A^r/d(\Lambda))$ elements of $\Lambda.$
\end{lem}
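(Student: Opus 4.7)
The plan is to deduce the estimate from Minkowski's second theorem on successive minima, combined with the standard lattice point counting bound via a Mahler-reduced basis. Let $\lambda_1 \leq \lambda_2 \leq \cdots \leq \lambda_r$ denote the successive minima of $\Lambda$ with respect to the unit cube $[-1,1]^n$. The key first observation is that since $\Lambda$ is a sublattice of $\Z^n$, every nonzero vector of $\Lambda$ has sup-norm at least $1$, whence $\lambda_i \geq 1$ for every $i$.

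Next, I would invoke Minkowski's second theorem (see, e.g., Chapter IV of Schmidt's book or \text{[\ref{ref1445}]}), which yields
\[
\lambda_1 \lambda_2 \cdots \lambda_r \asymp d(\Lambda),
\]
with implicit constants depending only on $n$ and $r$. Combined with the lower bound $\lambda_i \geq 1$, this gives
\[
\lambda_r \;\leq\; \lambda_1 \lambda_2 \cdots \lambda_r \;\ll\; d(\Lambda) \;\leq\; A,
\]
using the hypothesis $A \geq d(\Lambda)$. Thus $A/\lambda_i \gg 1$ for every $i$.

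The main analytic step is then the classical counting estimate
\[
\#\{\a \in \Lambda : |\a|_\infty \leq A\} \;\ll\; \prod_{i=1}^{r}\!\left(1 + \tfrac{A}{\lambda_i}\right),
\]
which follows by fixing a Mahler-reduced basis $\c_1,\dots,\c_r$ of $\Lambda$ with $|\c_i|_\infty \ll \lambda_i$, writing any $\a \in \Lambda$ as $\a = \sum m_i \c_i$, and using a dual-basis (or Cramer's rule) argument to show that $|\a|_\infty \leq A$ forces $|m_i| \ll A/\lambda_i$. Feeding in the bound $A/\lambda_i \gg 1$ from the previous paragraph, each factor $1 + A/\lambda_i$ is absorbed into $A/\lambda_i$, yielding
\[
\#\{\a \in \Lambda : |\a|_\infty \leq A\} \;\ll\; \frac{A^r}{\lambda_1 \cdots \lambda_r} \;\ll\; \frac{A^r}{d(\Lambda)},
\]
as desired.

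The only slightly delicate ingredient is the standard lattice point counting bound in the third paragraph, whose proof relies on the existence of a reduced basis and a careful estimate of coordinates; this is entirely classical and can be cited as a black box. The essential role of the hypothesis $A \geq d(\Lambda)$ is, together with integrality (which forces $\lambda_i \geq 1$), to push us into the regime $A \geq \lambda_r$ where the volume term $A^r/(\lambda_1\cdots\lambda_r)$ dominates the count; without this the lower-order terms coming from faces of smaller dimension would prevent such a clean bound.
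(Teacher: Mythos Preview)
Your argument is correct. The paper does not give its own proof here; it simply cites Heath-Brown [\ref{ref1454}, Lemma~1(v)], and the successive-minima argument you outline is precisely the standard route by which that lemma is established. So there is no meaningful difference in approach to report: you have filled in the proof behind the citation.

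One minor point worth making explicit: from $\lambda_1\cdots\lambda_r \ll d(\Lambda)$ and $\lambda_i\ge 1$ you only get $\lambda_r \le C_{n,r}\,d(\Lambda)$ with an implicit constant, so the hypothesis $A\ge d(\Lambda)$ does not literally give $A\ge\lambda_r$. This is harmless: in the range $d(\Lambda)\le A\le C_{n,r}\,d(\Lambda)$ one can enlarge the box to $|\a|\le C_{n,r}\,d(\Lambda)$ and absorb the resulting constant factor $C_{n,r}^r$ into the implied constant of the conclusion, since $A^r/d(\Lambda)\ge d(\Lambda)^{r-1}$ in this range. You may wish to insert a sentence to this effect.
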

\begin{proof}
    See \cite[Lemma 1 (v)]{RefJ1454}.
\end{proof}

\bigskip

\begin{lem}\label{lemma for congruence bound}
    The number of pairs $(u, v) \in \Z^2$ with $|u| \leq B,|v| \leq B$ and $u^k \equiv v^k \bmod d$ does not exceed $O\left(B^{1+\epsilon}+B^{2+\epsilon} d^{\epsilon-2 / k}\right)$.
\end{lem}
\begin{proof}
    See \cite[Lemma 2.4]{RefJ2}.
\end{proof}

\bigskip

\begin{lem}\label{Bruedern-Dietmann lemma1} Let $U_s(A,X)$ be the number of integer solutions $0<|a_i|<A$ and $0\leq |x_i|,|z_i|\leq X$ with $A\geq X^d$ satisfying 
\begin{equation*}
    \dsum_{i=1}^{s}a_i(x_i^d-z_i^d)=0.
\end{equation*}
Then, we have
$$U_s(A,X)\ll A^sX^s+A^{s-1}X^{2s-d+\epsilon}.$$
\end{lem}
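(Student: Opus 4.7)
The plan is to split $U_s(A,X)$ according to whether the difference vector $\c(\x,\z):=(x_1^d-z_1^d,\ldots,x_s^d-z_s^d)$ vanishes. When $\c=\0$, the equation $\sum a_i(x_i^d-z_i^d)=0$ holds trivially, so every admissible $\a$ is counted; since $x_i^d=z_i^d$ forces $z_i=\pm x_i$, the number of such $(\x,\z)$ is $O(X^s)$, and multiplying by the $O(A^s)$ choices of $\a$ yields the diagonal contribution $O(A^sX^s)$, matching the first term.

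For $\c\neq\0$, the condition $\langle\a,\c\rangle=0$ restricts $\a$ to the rank-$(s-1)$ sublattice $\Lambda_\c=\{\a\in\Z^s:\langle\a,\c\rangle=0\}$. By the orthogonal-lattice formula recalled at the beginning of Section \ref{sec2}, $d(\Lambda_\c)=\|\c\|_2/G(\c)$ where $G(\c)=\gcd(c_1,\ldots,c_s)$, and Lemma \ref{lemma for geometry of numbers 2} would give
$$\#\bigl([-A,A]^s\cap\Lambda_\c\bigr)\ll\frac{A^{s-1}G(\c)}{\|\c\|_2}$$
whenever $A\ge d(\Lambda_\c)$; the remaining small-$A$ regime would be controlled via the successive-minima bound $\prod_i(1+A/\lambda_i)$. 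Writing $r_1(c)=\#\{(x,z):|x|,|z|\le X,\ x^d-z^d=c\}$ and $r(\c)=\prod_i r_1(c_i)$, this reduces the off-diagonal piece to $\ll A^{s-1}\sum_{\c\neq\0}r(\c)G(\c)/\|\c\|_2$.

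The key quantitative input is the divisor-type bound $r_1(c)\ll_\epsilon|c|^\epsilon$ for $c\neq 0$: factoring $x^d-z^d=(x-z)Q_d(x,z)$ forces $x-z$ to divide $c$, and for each such divisor the resulting polynomial equation in $z$ has at most $d-1$ roots. The heuristic underlying the target inequality $\sum_{\c\neq\0}r(\c)G(\c)/\|\c\|_2\ll X^{2s-d+\epsilon}$ is that a generic $\c\in[-X^d,X^d]^s$ has $\|\c\|_2\asymp X^d$ and $G(\c)=O(1)$, so the weight $G/\|\c\|_2$ delivers a saving of $X^{-d+\epsilon}$ over the trivial total $\sum_\c r(\c)\le(2X+1)^{2s}\ll X^{2s}$. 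I would make this rigorous by writing $\c=g\d$ with $\gcd(d_1,\ldots,d_s)=1$, so that $G(\c)/\|\c\|_2=\|\d\|_2^{-1}$, and summing dyadically over $g$ and $\|\d\|_\infty$, bounding $\sum_\d r(g\d)$ by a congruence count modulo $g$ together with the per-coordinate divisor bound above.

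The main obstacle is controlling the contribution of vectors $\c$ with some but not all vanishing coordinates. When $c_i=0$ for $i$ in a subset $J^c\subset\{1,\ldots,s\}$, each such coordinate contributes a factor $r_1(0)\asymp X$, the components $a_i$ with $i\in J^c$ become unconstrained, and the lattice analysis must be redone on the smaller index set $J=\{i:c_i\neq0\}$. I would handle these intermediate cases $J$ separately, verifying that the factor $X^{|J^c|}$ gained from the zero coordinates is exactly compensated by the $X^{-d}$ saving from the reduced block $\c_J$, so that the target bound $A^{s-1}X^{2s-d+\epsilon}$ is achieved uniformly in $J$ (with the case $J^c=\emptyset$ being dominant). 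The overall structure follows the Brüdern-Dietmann template for diagonal diophantine pairs, with the refinement coming from pushing the GCD weighting through carefully.
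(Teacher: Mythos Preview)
The paper does not prove this lemma; its proof is the single citation ``See [$\ref{ref33}$, Theorem~2.5]'', and your plan is precisely the Br\"udern--Dietmann argument underlying that reference: split off the diagonal $\c=\0$, count off-diagonal $\a$ in the rank-$(s{-}1)$ lattice $\Lambda_{\c}$, stratify by the support $J=\{i:c_i\neq 0\}$, and sum the weight $G(\c)/\|\c\|$ over $(\x,\z)$.

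One caution on executing the weighted sum: if after writing $\c=g\d$ you invoke the pointwise divisor bound $r(g\d)\ll X^{\eps}$ and then sum $\|\d\|_2^{-1}$ over the full box $|d_i|\le 2X^d/g$, you obtain a quantity of order $X^{d(s-1)+\eps}$, which for $d\ge 3$ far exceeds the target $X^{2s-d+\eps}$; the sparsity of the support of $r$ is essential and is discarded in that step. The genuine $X^{-d}$ saving comes from the spacing $|x_i^d-z_i^d|\asymp X^{d-1}|x_i-z_i|$ on dyadic ranges, which forces $\#\{(\x,\z):\|\c\|_\infty\le Y\}\ll (YX^{2-d})^s$; this is the same mechanism behind the bound~\eqref{claim} for $\Xi_d(X)$ later in the paper, and it must be combined with your congruence-count idea for $g$ while the sum is still parametrised by $(\x,\z)$ rather than by the abstract vector $\d$.
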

\begin{proof}
    See \cite[Theorem 2.5]{RefJ33}.
\end{proof}
\bigskip

To describe the next lemma, it is convenient to introduce some definitions. We define
\begin{equation*}
F_1(\alpha_1,\alpha_2,\h)=\dsum_{\substack{1\leq \x,\y\leq X\\ \x,\y\in \Z^n}}e(\alpha_1\langle \h,\nu_{d,n}(\x)-\alpha_2\langle\h, \nu_{d,n}(\y)\rangle)
\end{equation*}
and
\begin{equation*}
    F_2(\beta,\h)=\dsum_{\a\in I_{\h}}e(\beta(P(\a+\h)-P(\a))),
\end{equation*}
where
\begin{equation*}
    I_{\h}=\{\a\in \Z^N:\ \|\a\|_{\infty}\leq A,\ \|\a+\h\|_{\infty}\leq A\}.
\end{equation*}
Additionally, we define
\begin{equation*}
    G_1(\alpha_1,\alpha_2)=\dsum_{\substack{\|\h\|_{\infty}\leq 2A\\\h\in \Z^N}}|F_1(\alpha_1,\alpha_2,\h)|^2\ \text{and}\ G_2(\beta)=\dsum_{\substack{\|\h\|_{\infty}\leq 2A\\ \h\in \Z^N}}|F_2(\beta,\h)|^2.
\end{equation*}
For any measurable set $\mathfrak{B}\in [0,1),\a\in \Z^N$ and $X>1$, define
\begin{equation}\label{I_a(X,B)}
    \mathcal{I}_{\a}(X,\mathfrak{B})=\int_{\mathfrak{B}}\dsum_{1\leq x\leq X}e(\alpha f_{\a}(\x))d\alpha.
\end{equation}
\begin{lem}\label{lemma 2.3}
 For any measurable set $\mathfrak{B}\in [0,1)$, we have
    \begin{equation*}
\dsum_{\substack{\|\a\|_{\infty}\leq A\\ P(\a)=0}}|\mathcal{I}_{\a}(X,\mathfrak{B})|^2\ll X^n\int_{\mathfrak{B}^2}G_1(\alpha_1,\alpha_2)^{1/4}d\alpha_1 d\alpha_2\int_0^1G_2(\beta)^{1/4}d\beta.
    \end{equation*}
\end{lem}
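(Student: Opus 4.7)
The plan is to combine orthogonality in an auxiliary variable $\beta$, Cauchy--Schwarz in the variables $(\x,\y)$, Weyl differencing in $\a$, and a final Cauchy--Schwarz in the difference vector $\h$, so as to decouple the $F_1$ factor from the $F_2$ factor.

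First, expanding the square from the definition \eqref{I_a(X,B)} of $\mathcal{I}_{\a}(X,\mathfrak{B})$ gives
\[
|\mathcal{I}_{\a}(X,\mathfrak{B})|^{2}
=\int_{\mathfrak{B}^{2}}\sum_{1\le\x,\y\le X}
e\bigl(\alpha_{1}f_{\a}(\x)-\alpha_{2}f_{\a}(\y)\bigr)\,d\alpha_{1}\,d\alpha_{2}.
\]
I then detect the condition $P(\a)=0$ by the standard orthogonality relation $\mathbf{1}_{P(\a)=0}=\int_{0}^{1}e(\beta P(\a))\,d\beta$, and interchange the $\a$-sum with the integrals. The left-hand side of the lemma becomes
\[
\sum_{\substack{\|\a\|_\infty\le A\\ P(\a)=0}}|\mathcal{I}_{\a}(X,\mathfrak{B})|^{2}
=\int_{\mathfrak{B}^{2}\times[0,1]}U(\alpha_{1},\alpha_{2},\beta)\,d\alpha_{1}\,d\alpha_{2}\,d\beta,
\]
where
\[
U(\alpha_{1},\alpha_{2},\beta)=\sum_{\|\a\|_{\infty}\le A}\sum_{1\le\x,\y\le X}
e\bigl(\alpha_{1}f_{\a}(\x)-\alpha_{2}f_{\a}(\y)+\beta P(\a)\bigr).
\]
Since the left-hand side is real and non-negative, it is bounded by $\int_{\mathfrak{B}^{2}\times[0,1]}|U|$, so the task reduces to establishing the pointwise estimate $|U|\ll X^{n}G_{1}(\alpha_{1},\alpha_{2})^{1/4}G_{2}(\beta)^{1/4}$, whereupon the claimed factored bound follows by integrating.

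To prove the pointwise bound, I view $U=\sum_{\x,\y}V$ where $V=V(\x,\y;\alpha_{1},\alpha_{2},\beta)$ is the inner exponential sum over $\a$. Cauchy--Schwarz in the $\asymp X^{2n}$ variables $(\x,\y)$ yields $|U|^{2}\le X^{2n}\sum_{\x,\y}|V|^{2}$. Expanding $|V|^{2}=V\overline{V}$ and making the Weyl differencing substitution $\h=\a_{1}-\a_{2}$, $\a=\a_{2}$ (the range $\|\h\|_{\infty}\le 2A$ being forced by non-emptiness of $I_{\h}$), one obtains
\[
|V|^{2}=\sum_{\|\h\|_{\infty}\le 2A}
e\bigl(\alpha_{1}\langle\h,\nu_{d,n}(\x)\rangle-\alpha_{2}\langle\h,\nu_{d,n}(\y)\rangle\bigr)\,F_{2}(\beta,\h).
\]
Summing over $\x,\y\in[1,X]^{n}\cap\Z^{n}$ recognises $F_{1}(\alpha_{1},\alpha_{2},\h)$, giving $\sum_{\x,\y}|V|^{2}=\sum_{\h}F_{1}(\alpha_{1},\alpha_{2},\h)F_{2}(\beta,\h)$. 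A final application of Cauchy--Schwarz in $\h$ separates the two factors and yields $\sum_{\x,\y}|V|^{2}\le G_{1}(\alpha_{1},\alpha_{2})^{1/2}G_{2}(\beta)^{1/2}$, hence $|U|\ll X^{n}G_{1}^{1/4}G_{2}^{1/4}$, which is what was needed.

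There is no serious technical obstacle in this argument: it is essentially a double Cauchy--Schwarz wrapped around the Weyl differencing identity, with the orthogonality trick to handle the constraint $P(\a)=0$. The conceptual purpose of this manoeuvre is to decouple the auxiliary polynomial condition (now encoded in $G_{2}$) from the original diophantine sum in $\x,\y$ (encoded in $G_{1}$), so that each quantity can subsequently be estimated by different techniques --- $G_1$ by exponential sum methods for the polynomial $f_{\h}$ and $G_2$ by the geometry-of-numbers bound (Lemma~\ref{lemma for geometry of numbers 2}) together with [\ref{ref33}, Theorem 2.5] cited above.
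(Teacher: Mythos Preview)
Your argument is correct and is precisely the standard orthogonality--Cauchy--Schwarz--Weyl differencing--Cauchy--Schwarz manoeuvre that the cited result [\ref{ref10012}, Lemma 3.5] records; the paper does not reprove it here but simply refers to that earlier lemma. One minor inaccuracy in your closing commentary (which does not affect the proof itself): the integral $\int_0^1 G_2(\beta)^{1/4}\,d\beta$ is not handled via the geometry-of-numbers Lemma~\ref{lemma for geometry of numbers 2} but rather by the Weyl-type bound [\ref{ref10012}, Lemma 2.10] for the non-singular form $P$, as you can see in the proof of Lemma~\ref{lem4.6}.
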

\begin{proof}
    See \cite[Lemma 3.5]{RefJ10012}.
\end{proof}

\section{A lattice counting problem}\label{sec44}
In this section, we provide a new lattice counting problem which is the essential ingredient for the main theorem in this paper. 
Let $N(A,X)$ denote the number of solutions of equations 
\begin{equation}\label{key system of equations}
    \dsum_{1\leq i\leq s}a_i(x_i^d-z_i^d)=\dsum_{1\leq i\leq s}a_i(y_i^d-w_i^d)=0
\end{equation}
in integers $a_i,x_i,y_i,z_i,w_i$ with $|a_i|\leq A$ and $|x_i|,|y_i|,|z_i|,|w_i|\leq X$.

\begin{lem}\label{lemma for a new lattice counting problem}
Let $d \ge 4$ and $s\geq d+2.$ Suppose that $A\geq X^{2d}\geq 1.$ Then, one has
\begin{equation}\label{conclusion of lemma 3.1.}
    N(A,X)\ll A^s X^{2s}+A^{s-1}X^{3s-d+\eps}+A^{s-2}X^{4s-2d+\eps}E(X),
\end{equation}
where
\[E(X) = 1+X^{2d-s+3} +X^{4d-2s+4}.\]
\end{lem}
\begin{proof}
We begin by observing that it suffices to count the number of solutions of equations:
\begin{equation}\label{key system of equations2}
    \dsum_{1\leq i\leq s}a_i(x_i^d-z_i^d)=\dsum_{1\leq i\leq s}a_i(y_i^d-w_i^d)=0
\end{equation}
in integers $a_i,x_i,y_i,z_i,w_i$ with $0< |a_i|\leq A$ and $X/2 \leq |x_i|,|y_i|,|z_i|,|w_i|\leq X$. Indeed,  we temporarily define 
$$W(\alpha_1,\alpha_2):=\dsum_{|l|\leq A}\biggl|\dsum_{ |x|\leq X}e(\alpha_1 lx^d)\biggr|^2\biggl|\dsum_{|y|\leq X}e(\alpha_2ly^d)\biggr|^2.$$ Then, it follows by orthogonality that
\begin{equation}\label{N(A,X) representation}
    N(A,X)=\int_0^1\int_0^1 |W(\alpha_1,\alpha_2)|^{s}d\alpha_1d\alpha_2.
\end{equation}
By the elementary inequality $(a+b)^n\ll a^n+b^n$ with $a,b\geq 0$ and $n\in \mathbb{N},$ we deduce from $(\ref{N(A,X) representation})$ that
\begin{equation}\label{first upper bound for N(A,X)}
     N(A,X)\ll X^{4s}+A^sX^{2s}+\int_0^1\int_0^1| \widetilde{W}(\alpha_1,\alpha_2)|^sd\alpha_1 d\alpha_2,
\end{equation}
where 
\begin{equation*}
    \widetilde{W}(\alpha_1,\alpha_2):=\dsum_{0<|l|\leq A}\biggl|\dsum_{1\leq |x|\leq X}e(\alpha_1 lx^d)\biggr|^2\biggl|\dsum_{1\leq |y|\leq X}e(\alpha_2ly^d)\biggr|^2.
\end{equation*}
Let $\widetilde{W}_{ij}(\alpha_1,\alpha_2)$ be the portion of the sum $\widetilde{W}(\alpha_1,\alpha_2)$ where $2^{-i}X<|x|\leq 2^{-i+1}X$ and $2^{-j}X<|y|\leq 2^{-j+1}X$. Then, by applying the Cauchy-Schwarz inequality, one has
\begin{equation*}
    \widetilde{W}(\alpha_1,\alpha_2)\ll X^{\epsilon}\dsum_{i=1}^L\dsum_{j=1}^L \widetilde{W}_{ij}(\alpha_1,\alpha_2)
\end{equation*}
with $L=O(\log X).$ Then, it following by applying the H\"older's inequality in $(\ref{first upper bound for N(A,X)})$ that
\begin{equation}\label{second bound for N(A,X)}
     N(A,X)\ll X^{4s}+A^sX^{2s}+X^{\epsilon}\dsum_{i=1}^L\dsum_{j=1}^L \int_0^1\int_0^1| \widetilde{W}_{ij}(\alpha_1,\alpha_2)|^sd\alpha_1 d\alpha_2.
\end{equation}

The mean values over $\alpha_1$ and $\alpha_2$ in the third term of the right hand side in $(\ref{second bound for N(A,X)})$ count the number of solutions of a system of equations:
\begin{equation}\label{key system of equations3}
    \dsum_{1\leq k\leq s}a_k(x_k^d-z_k^d)=\dsum_{1\leq k\leq s}a_k(y_k^d-w_k^d)=0
\end{equation}
in integers $a_k,x_k,y_k,z_k,w_k$ with $0< |a_k|\leq A$ and $2^{-i}X < |x_k|,|z_k|\leq 2^{-i+1}X$ and $2^{-j}X <|y_k|,|w_k|\leq 2^{-j+1}X$. Meanwhile, the system $(\ref{key system of equations3})$ is equivalent to 
\begin{equation}\label{key system of equations4}
     \dsum_{1\leq k\leq s} a_k((2^{i-1}\cdot x_k)^d-(2^{i-1}\cdot z_k)^d)=\dsum_{1\leq k\leq s} a_k((2^{j-1}\cdot y_k)^d-(2^{j-1}\cdot w_k)^d)=0.
\end{equation}
Furthermore, one sees obviously that the number of solutions satisfying $(\ref{key system of equations4})$, with $0< |a_k|\leq A$ and $2^{-i}X < |x_k|,|z_k|\leq 2^{-i+1}X$ and $2^{-j}X <|y_k|,|w_k|\leq 2^{-j+1}X$, is bounded above by 
\begin{equation}\label{key system of equations5}
     \dsum_{1\leq k\leq s}a_k(x_k^d-z_k^d)=\dsum_{1\leq k\leq s} a_k(y_k^d-w_k^d)=0,
\end{equation}
in integers $a_i,x_i,y_i,z_i,w_i$ with $0< |a_i|\leq A$ and $X/2 < |x_k|,|y_k|,|z_k|,|w_k|\leq X$. This shows that 
\begin{equation*}
    \int_0^1\int_0^1| \widetilde{W}_{ij}(\alpha_1,\alpha_2)|^sd\alpha_1 d\alpha_2\leq  \int_0^1\int_0^1| \widetilde{W}_{11}(\alpha_1,\alpha_2)|^sd\alpha_1 d\alpha_2,
\end{equation*}
for all $1\leq i,j\leq L,$ and hence
it follows from $(\ref{second bound for N(A,X)})$ that
\begin{equation}\label{third bound for N(A,X)}
    N(A,X)\ll X^{4s}+A^sX^{2s}+X^{\epsilon}\int_0^1\int_0^1| \widetilde{W}_{11}(\alpha_1,\alpha_2)|^sd\alpha_1 d\alpha_2.
\end{equation}
On noting that the third term in $(\ref{third bound for N(A,X)})$ is bounded by the number of solutions satisfying $(\ref{key system of equations2})$, it suffices to show that the number of solutions satisfying $(\ref{key system of equations2})$ is bounded above by the right hand side of $(\ref{conclusion of lemma 3.1.}),$ in order to complete the proof of Lemma $\ref{lemma for a new lattice counting problem}$.

Let $N_1(A,X)$ be the number of solutions counted by $(\ref{key system of equations2})$  where
vectors $(x_1^d-z_1^d,\ldots, x_s^d-z_s^d)\in \R^s$ and $(y_1^d-w_1^d,\ldots,y_s^d-w_s^d)\in \R^s$ are linearly dependent over $\R.$ First we bound the number of solutions counted by $N_1(A,X)$ satisfying $x_i^d-z_i^d \neq 0$ for all $1 \le i \le s$.

The linearly dependent condition implies that there exists $c\in \Q$ such that 
\begin{equation}\label{conditino 1 L.D.}
    c(x_i^d-z_i^d)=(y_i^d-w_i^d)\ \text{with}\ 1\leq i\leq s.
\end{equation}

Consider the case $c=0.$ This gives $y_i^d=w_i^d$ for all $1\leq i\leq s.$ Hence, the system $(\ref{key system of equations2})$ of equations, satisfying $(\ref{conditino 1 L.D.})$ with $c=0$, implies that 
\begin{equation}\label{condition 2 L.D.}
    \sum_{i=1}^{s}a_i(x_i^d-z_i^d)=0\ \text{and}\ y_i^d=w_i^d\ (1\leq i\leq s).
\end{equation}
Therefore, since the number of solutions $y_i$ and $w_i$ satisfying $y_i^d=w_i^d$ with $|y_i|,|w_i|\leq X$ is $O(X^s),$ it follows by Lemma $\ref{Bruedern-Dietmann lemma1}$ that the number of solutions $x_i,y_i,z_i,w_i$ satisfying $(\ref{condition 2 L.D.})$ with $0<|a_i|\leq A$ and $X/2 \le |x_i|,|y_i|,|z_i|,|w_i|\leq X$ is $O(A^sX^{2s}+A^{s-1}X^{3s-d+\epsilon}).$ 

Consider the case $c\neq 0.$ Then, the system $(\ref{conditino 1 L.D.})$ of equations implies
\begin{equation}\label{condition 3 L.D.}
    (y_1^d-w_1^d)(x_i^d-z_i^d)=(y_i^d-w_i^d)(x_1^d-z_1^d)\ (2\leq i\leq s).
\end{equation}
Furthermore, the system $(\ref{key system of equations2})$ of equations with $x_i^d-z_i^d\neq 0$ for all $1\leq i\leq s$ implies that
\begin{equation}\label{condition 4 L.D.}
    \dsum_{i=1}^s a_i(x_i^d-z_i^d)=0\ \text{and}\ x_i^d-z_i^d\neq 0\ (1\leq i\leq s).
\end{equation}
On noting that $y_i^d-w_i^d=(y_i-w_i)(y_i^{d-1}+\cdots+w_i^{d-1}),$ we infer that for given $y_1,w_1$ and $a_i,x_i,z_i\ (1\leq i\leq s)$ satisfying  $(\ref{condition 4 L.D.})$, the number of solutions $y_i, w_i\ (2\leq i\leq s)$ is $O(X^{\epsilon})$ by the standard divisor estimate. Indeed, we ruled out the case $y_1=w_1,$ since otherwise $x_1^d=z_1^d$ by $(\ref{condition 3 L.D.})$ and the first equation in $(\ref{conditino 1 L.D.})$, which contradicts $x_i^d-z_i^d\neq 0\ (1\leq i\leq s).$ Therefore, on noting by Lemma $\ref{Bruedern-Dietmann lemma1}$ that the number of solutions $a_i,x_i,z_i\ (1\leq i\leq s)$ satisfying $(\ref{condition 4 L.D.})$ is $O(A^sX^{s}+A^{s-1}X^{2s-d+\epsilon}),$ and that the number of possible choices of $y_1,w_1$ is $O(X^2),$ we find by discussion above that the number of solutions $a_i,x_i,y_i,z_i,w_i\ (1\leq i\leq s)$ satisfying $(\ref{condition 3 L.D.})$ and $(\ref{condition 4 L.D.})$ is $O(A^s X^{s+2}+A^{s-1}X^{2s-d+2+\epsilon}).$ To sum up,  we conclude that the number of solutions counted by $N_1(A,X)$ with $x_1^d-z_i^d\neq 0$ for all $1\leq i\leq s$ is
\begin{equation}\label{conclusion N_1}
     O(A^sX^{2s}+A^{s-1}X^{3s-d+\epsilon}).
\end{equation}

Next, assume that the number of indices $i$ such that $x_i^d-z_i^d=0$ with $1\leq i\leq s$ is $r\ (\neq 0).$ We denote the set of these indices $i$ by $I$ with $|I|=r$. Then, the systems $(\ref{key system of equations2})$ and $(\ref{conditino 1 L.D.})$ of equations reduce to 
\begin{equation}\label{condition 5}
    \dsum_{i\notin I}a_i(x_i^d-z_i^d)=\dsum_{i\notin I}a_i(y_i^d-w_i^d)=0
\end{equation}
and 
\begin{equation}\label{condition 6}
    c(x_i^d-z_i^d)=y_i^d-z_i^d\ (i\notin I)
\end{equation}
and 
\begin{equation}\label{condition 7}
    y_i^d-w_i^d=0\ (i\in I).
\end{equation}
From the discussion leading to $(\ref{conclusion N_1})$, we find that the number of solutions $a_i,x_i,y_i,z_i,w_i\ (i\in I)$ satisfying $(\ref{condition 5})$ and $(\ref{condition 6})$ \begin{equation}\label{condition 8}
   O(A^{s-r}X^{2(s-r)}+A^{s-r-1}X^{3(s-r)-d+\epsilon}). 
\end{equation} Since the number of possible choices of $a_i\ (i\in I)$ is $O(A^r)$, and the number of solutions $x_i,y_i,w_i,z_i\ (i\in I)$ satisfying $(\ref{condition 7})$ and $x_i^d-z_i^d=0\ (i\in I)$ is $O(X^{2r})$,  we conclude by $(\ref{condition 8})$ that the number of solutions $a_i,x_i,y_i,z_i,w_i$ satisfying $(\ref{condition 5}), (\ref{condition 6})$ and $(\ref{condition 7})$ together with $x_i^d-z_i^d=0\ (i\in I)$ is $O(A^sX^{2s+\epsilon}+A^{s-1}X^{3s-r-d+\epsilon}).$ Therefore, by summing over the cases with $0\leq r\leq s$, we conclude that is 
\begin{equation}\label{eqs:N1bound}
    N_1(A,X)\ll A^sX^{2s}+A^{s-1}X^{3s-d+\epsilon}.
\end{equation}

We denote by $N_2(A,X)$ the  number of solutions counted by  ($\ref{key system of equations2}$) where the vectors $(x_1^d-z_1^d,\ldots, x_s^d-z_s^d)\in \R^s$ and $(y_1^d-w_1^d,\ldots,y_s^d-w_s^d)\in \R^s$ are linearly independent over $\R.$ We bound this quantity via an application of the geometry of numbers. For fixed $\bfx,\bfy,\bfz,\bfw$ contributing to $N_2(A,X)$ we denote
\[\Delta_{i,j} = \det \left(\begin{matrix}
    x_i^d-z_i^d & y_i^d-w_i^d \\
    x_j^d-z_j^d & y_j^d-w_j^d
\end{matrix}\right).\]
Then, just as in \cite[Lemma 2.5]{RefJ2} we have by Lemma \ref{lemma for geometry of numbers 2} the following bound.
\begin{align}\label{eqs:reductionindependent}
    N_2(A,X) &\ll A^{s-2}\sum_{\substack{X/2 \le \bfx,\bfy,\bfz,\bfw \le X \\ \Delta_{i,j} \neq 0 \\ \text{for some }1\le i<j \le s}}\frac{\gcd_{1 \le i<j \le s}\Delta_{i,j}}{\max_{1 \le i < j \le s}\Delta_{i,j}} \\
    &\ll A^{s-2} \sum_{\substack{X/2 \le x_1,y_1,z_1,w_1 \le X \\ X/2 \le x_2,y_2,z_2,w_2 \le X \\ \Delta_{1,2} \ge 1}} \frac{1}{\Delta_{1,2}}\sum_{D | \Delta_{1,2}}D \psi_{s}(X,D),
\end{align}
where $\psi_{s}(X,D) = \psi_{s}(X,D;x_1,x_2,y_1,y_2,z_1,z_2,w_1,w_2)$ denotes the number of solutions 
\[x_3,\ldots,x_s,y_3,\ldots,y_s,z_3,\ldots,z_s,w_3,\ldots,w_s \in \N \cap [X/2,X],\] 
to the system of congruences
\[(x_i^d-z_i^d)(y_j^d-w_j^d) \equiv (x_j^d-z_j^d)(y_i^d-w_i^d) \nmod D \quad \text{for all} \quad 1 \le i < j \le s.\]

We prove a bound on $\psi_s(X,D)$ when $1 \le D \le 8X^{2d}$ by defining the quantity 
\[m(\bfx,\bfy,\bfz,\bfw) = \min_{3 \le i,j \le s} \{(D,x_i^d-z_i^d),(D, y_j^d-w_j^d)\}, \]
and consider those solutions counted by $\psi_s(X,D)$ satisfying $m(\bfx,\bfy,\bfz,\bfw) = l$, we denote the quantity of these solutions by $\psi_{s,l}(X,D)$. Then by dyadically summing over the range $1 \le l \le D$ one deduces that
\begin{equation}\label{eqs:psibound}
    \psi_s(X,D) \ll X^\eps \sup_{1 \le L \le D} \sum_{L/2 \le l \le L} \psi_{s,l}(X,D).
\end{equation}

Given a fixed $1 \le l \le D$ we may suppose, without loss of generality, that $(D,x_3^d-z_3^d) = l$. Similar to the method of Br\"{u}dern and Dietmann, we ignore most restrictions and simply bound the number of solutions satisfying the congruences
\[(x_3^d-z_3^d)(y_j^d-w_j^d) \equiv (x_j^d-z_j^d)(y_3^d-w_3^d) \nmod D \quad \text{for all} \quad j \neq 3,\]
with $m(\bfx,\bfy,\bfz,\bfw) = l$.

First, let us bound the number of choices for $x_3,\ldots,x_s,z_3,\ldots,z_s$. In order for a solution to be counted in $\psi_{s,l}(X,D)$ it must satisfy
\[x_j^d-z_j^d \equiv 0 \nmod T_j \ \text{for all}\ 3 \le j \le s,\]
where $T_j\ge l$. Recalling Lemma \ref{lemma for congruence bound}, we deduce that the number of choices for $x_3,\ldots,x_s,z_3,\ldots,z_s$ is at most
\begin{equation}\label{eqs:xzbound}
    X^{s-2+\eps} \left(1+X l^{-\frac{2}{d}}\right)^{s-2}.
\end{equation}
We apply the same procedure to bound the number of choices for $y_3,w_3$, thus the contribution from these two variables is at most
\begin{equation}\label{eqs:y3w3bound}
    X^{1+\eps}\left(1+X l^{-\frac{2}{d}} \right).
\end{equation}

Next, with the variables $\bfx,y_3,\bfz,w_3$ now fixed, we bound the number of choices for $y_4,\ldots,y_s,w_4,\ldots,w_s$ in two ways. Note that $y_4,\ldots,y_s,w_4,\ldots,w_s$ must satisfy the following family of congruences
\begin{equation}\label{eqs:ywcongprior}
    (x_3^d-z_3^d)(y_j^d-w_j^d) \equiv (x_j^d-z_j^d)(y_3^d-w_3^d) \nmod D \ \text{for all}\ 4 \le j \le s.
\end{equation}
By our definition of $l$ one may deduce that there exists fixed integers $C_j$ for each $4 \le j \le s$ such
that the system of congruences (\ref{eqs:ywcongprior}) is equivalent to the following system of congruences
\begin{equation}\label{eqs:ywcong}
    y_j^d-w_j^d \equiv C_j \nmod D/l \ \text{for all}\ 4 \le j \le s.
\end{equation}
By considering the exponential sums which count solutions of the system (\ref{eqs:ywcong}) it is easily seen that the number of solutions to (\ref{eqs:ywcong}) is bounded above by the number of solutions to
\[y_j^d-w_j^d \equiv 0 \nmod D/l \ \text{for all}\ 4 \le j \le s.\]
Applying Lemma \ref{lemma for congruence bound} $s-3$ times, we deduce that the number of choices for $y_4,\ldots,y_s,w_4,\ldots,w_s$ is at most 
\begin{equation}\label{eqs:ywfirstbound}
    X^{s-3+\eps} \left(1+X (D/l)^{-\frac{2}{d}} \right)^{s-3}.
\end{equation}
However, one may also apply the same method we used for the variables $\bfx,y_3,\bfz,w_3$ and obtain the bound
\begin{equation}\label{eqs:ywsecondbound}
    X^{s-3+\eps} \left(1+X l^{-\frac{2}{d}} \right)^{s-3}.
\end{equation}

Combining (\ref{eqs:xzbound}), (\ref{eqs:y3w3bound}), (\ref{eqs:ywfirstbound}), and (\ref{eqs:ywsecondbound}) we deduce that when $L/2 \le l \le L$ one has that
\begin{equation}\label{eqs:psislbound}
    \psi_{s,l}(X;D) \ll X^{2s-4+\eps}\left(1+XL^{-\frac{2}{d}} \right)^{s-1}\left(1+X \min\{L^{-\frac{2}{d}},(D/L)^{-\frac{2}{d}}\} \right)^{s-3}.
\end{equation}
We now list different bounds on $\psi_{s,l}(X;D)$ by considering different cases relating the relative sizes of $L$ to $X^{d/2},D^{1/2},$ and $DX^{-d/2}$. 
\begin{align*}
    &X^{2s-4+\eps}(XL^{-\frac{2}{d}})^{s-1}, &\text{if}& \ 1 \le L \le \min\{X^{\frac{d}{2}},D^{\frac{1}{2}},DX^{-\frac{d}{2}}\}, \\
    &X^{2s-4+\eps}(XL^{-\frac{2}{d}})^{s-1} (X(D/L)^{-\frac{2}{d}})^{s-3}, &\text{if}& \ DX^{-\frac{d}{2}} \le L \le \min\{X^{\frac{d}{2}},D^{\frac{1}{2}}\}, \\
    &X^{2s-4+\eps}(XL^{-\frac{2}{d}})^{2s-4}, &\text{if}& \ D^{\frac{1}{2}} \le L \le X^{\frac{d}{2}}, \\ 
    &X^{2s-4+\eps}, &\text{if}& \ X^{\frac{d}{2}} \le L \le D. \\
\end{align*}
Combining this with (\ref{eqs:psibound}) and noting that $2 \le \frac{2}{d}(s-2)$ we deduce
\begin{equation}\label{eqs:finalpsibound}
    \psi_s(X;D) \ll X^{4s-8+\eps}D^{\frac{1}{2}-\frac{2}{d}(s-2)}+X^{3s-5+\eps}+X^{2s-4+\eps}D.
\end{equation}
Utilizing (\ref{eqs:finalpsibound}) with (\ref{eqs:reductionindependent}), we deduce
\[N_2(A,X) \ll A^{s-2} X^{\eps}\left(X^{2s+2d+4} +X^{3s+3}+ X^{4s-8}\Xi_d(X)\right),\]
where
\[\Xi_d(X) = \sum_{\substack{X/2 \le x_1,y_1,z_1,w_1 \le X \\ X/2 \le x_2,y_2,z_2,w_2 \le X \\ \Delta_{1,2} \ge 1}} \frac{1}{\Delta_{1,2}}.\]

We now establish that 
\begin{equation}\label{claim}
    \Xi_d(X)\ll X^{8-2d+\epsilon}.
\end{equation}
Since $\Delta_{1,2}\geq 1$, one of terms $(x_1^d-z_1^d)(y_2^d-w_2^d)$ and $(x_2^d-z_2^d)(y_1^d-w_1^d)$ is non-zero. Then, it suffices to consider the case 
 $(x_2^d-z_2^d)(y_1^d-w_1^d)\neq 0$ in the summation in $(\ref{claim}).$
Observe that 
\begin{equation}
\begin{aligned}
    \min_{\substack{X/2\leq |x_2|,|\widetilde{x}_2|\leq X\\ x_2\neq \widetilde{x}_2}}|(x_2^d-z_2^d)-(\widetilde{x}_2^d-z_2^d)|=\min_{\substack{X/2\leq |x_2|,|\widetilde{x}_2|\leq X\\ x_2\neq \widetilde{x}_2}}|x_2^d-\widetilde{x}_2^d|\geq B,
\end{aligned}
\end{equation}
with $B=O(X^{d-1}).$
Then, by observing that for fixed $z_2$, the function $x^d-z_2^d$ is monotonic increasing function in $x$, we infer that 
\begin{equation*}
\begin{aligned}
      \sum_{\substack{x_1,x_2 \\ y_1,y_2 \\ z_1,z_2 \\ w_1,w_2\\ \Delta_{1,2}\geq 1}}\frac{1}{\Delta_{1,2}}&= \dsum_{\substack{x_1,z_1\\ y_2,w_2}}\dsum_{z_2,y_1,w_1}\dsum_{\substack{x_2\\ \Delta_{1,2}\geq 1}}\frac{1}{|(x_1^d-z_1^d)(y_2^d-w_2^d)-(x_2^d-z_2^d)(y_1^d-w_1^d)|}\\
      &\ll \dsum_{\substack{x_1,z_1\\ y_2,w_2}}\dsum_{z_2,y_1,w_1}\dsum_{\substack{1\leq n\leq X}} \frac{1}{1+nB(y_1^d-w_1^d)}\\
      &\ll X^5\dsum_{y_1,w_1}\dsum_{1\leq n\leq X}\frac{1}{1+nB(y_1^d-w_1^d)}.
\end{aligned}
\end{equation*}
Similarly, the last expression is seen to be 
\begin{equation*}
\begin{aligned}
    &X^5\dsum_{y_1,w_1}\dsum_{1\leq n\leq X}\frac{1}{1+nB(y_1^d-w_1^d)}\\
    &\ll X^5 \dsum_{1\leq n\leq X}\dsum_{w_1}\dsum_{1\leq m\leq X}\frac{1}{1+nmB^2}\\
    &\ll X^6\dsum_{1\leq n,m\leq X}\frac{1}{1+nmB^2}.
\end{aligned}
\end{equation*}
It follows by the standard divisor estimate that the bound in the last expression is 
\begin{equation*}
    \ll X^{6+\epsilon}\dsum_{1\leq n\leq X^2}\frac{1}{1+nB^2}\ll X^{6+\epsilon}B^{-2}.
\end{equation*}
Since $B=O(X^{d-1}),$ we confirm the claim $(\ref{claim}).$ We conclude that
\begin{align}\label{eqs:N2bound}
    N_2(A,X) \ll A^{s-2}X^{4s-2d+\eps}\left(1+X^{2d-s+3} +X^{4d-2s+4}\right).
\end{align}

Combining (\ref{eqs:N1bound}) with (\ref{eqs:N2bound}) we deduce the claimed bound.

\end{proof}

\section{Bounds for large moduli}\label{sec4}
In this section, we provide bounds for large moduli via mean values of exponential sums that coincide with the counting problem in Lemma $\ref{lemma for a new lattice counting problem}$. We define here the major and minor arcs. 
For $B>0,$ define the major arcs 
\begin{equation}\label{4.24.24.2}
    \mathfrak{M}(B)=\bigcup_{\substack{0\leq a\leq q\leq B\\(q,a)=1}}\mathfrak{M}(q,a),
\end{equation}
where $$\mathfrak{M}(q,a)=\{\alpha\in [0,1):\ |\alpha-a/q|\leq BA^{-1}X^{-d}\},$$ 
and define the minor arcs $\mathfrak{m}(B)=[0,1)\setminus \mathfrak{M}(B).$ Recall from (\ref{def2.2}) that $w = \log X$. Here and throughout, we abbreviate $\mathfrak{M}(w)$ and $\mathfrak{m}(w)$ simply to $\mathfrak{M}$ and $\mathfrak{m}.$

\bigskip

In preparation for the next Lemma, we recall the definition of the permutation $[\ \cdot\ ]$ from Section \ref{sec:intro}, the definition of 
\[\mathcal{I}_{\a}(X,\mathfrak{B})=\mathcal{I}_{[(\b,\c)]}(X,\mathfrak{B}),\]
from $(\ref{I_a(X,B)})$, and the quantity $N=\binom{n+d-1}{d}.$

\begin{lem}\label{lem4.6}
Let $n=8s+r$ with $s,r\in \N$ and  $s\geq 3d$ and $1\leq r\leq 8$ Suppose that $P\in \Z[\x]$ is a non-singular form in $N$ variables of degree $k\geq 2.$ Then, whenever $1\leq X^{2d}\leq A\leq X^{s-d}$ and $N\geq 200k(k-1)2^{k-1}$, there exists $\delta'>0$ such that we have 
\begin{equation*}
\dsum_{\substack{\|\b\|_{\infty},\|\c\|_{\infty}\leq A\\ P([\b,\c])=0}}\bigl|\mathcal{I}_{[(\b,\c)]}(X,\mathfrak{m}(X^{\delta}))\bigr|^2\ll A^{N-k-2-\delta'}X^{2n-2d}.
\end{equation*}
\end{lem}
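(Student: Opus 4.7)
The plan is to apply Lemma \ref{lemma 2.3} with $\mathfrak{B}=\mathfrak{m}(X^\delta)$, which bounds the left-hand side by
$$X^n \Bigl(\int_{\mathfrak{m}(X^\delta)^2} G_1(\alpha_1,\alpha_2)^{1/4}\, d\alpha_1 d\alpha_2\Bigr)\Bigl(\int_0^1 G_2(\beta)^{1/4}\, d\beta\Bigr),$$
after which the two integrals can be estimated separately. The $G_2$-factor is handled exactly as in [\ref{ref10012}]: the nonsingularity of $P$ together with the Birch threshold $N\geq 200k(k-1)2^{k-1}$ allows one to run Birch's circle method on the difference form $P(\a+\h)-P(\a)$, giving an $L^1$-bound on $G_2(\beta)$ of shape $A^{2N-k}$ up to lower-order factors; H\"{o}lder then transfers this to the required bound on $\int_0^1 G_2(\beta)^{1/4}\, d\beta$.

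The essentially new input is the treatment of the $G_1$-factor, and here one exploits the lattice counting improvement from Section \ref{sec44}. Write each $\h=(\b,\c)\in\Z^n\times\Z^{N-n}$ so that $\b$ collects the coefficients of the pure $d$-th powers $x_i^d$ and $\c$ the remaining monomials, and partition the $n=8s+r$ Veronese variables into $s$ blocks of eight (plus $r$ extras which are controlled trivially). After expanding $|F_1|^2$, applying H\"{o}lder in $(\alpha_1,\alpha_2)$, and executing the $\c$-summation trivially, the integral of $G_1^{1/4}$ is dominated by a product of a controlled off-diagonal factor and a diagonal mean value which, by orthogonality in $\b$, is precisely the counting function $N(A,X)$ of Section \ref{sec44}. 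Since $s\geq 3d$ and $d\geq 4$ give $\tfrac12(5d+4-2s)\leq 2-d/2\leq 0$ and $4d+4-2s\leq 4-2d\leq 0$, we have $E(X)=O(1)$, so Lemma \ref{lemma for a new lattice counting problem} yields
$$N(A,X)\ll A^sX^{2s}+A^{s-1}X^{3s-d+\eps}+A^{s-2}X^{4s-2d+\eps}.$$
Combining this with the pointwise Weyl-type saving of order $X^{-\sigma}$ available on the minor arcs $\mathfrak{m}(X^\delta)$ for each univariate Weyl sum $\sum_x e(\alpha b x^d)$, and balancing the terms in the range $X^{2d}\leq A\leq X^{s-d}$, produces the required power-saving bound on $\int_{\mathfrak{m}(X^\delta)^2} G_1^{1/4}$.

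The main obstacle is to verify that all three summands in the bound for $N(A,X)$, once multiplied through by the Birch estimate for the $G_2$-factor and the Weyl saving on $\mathfrak{m}(X^\delta)$, fit beneath the target $A^{N-k-2-\delta'}X^{2n-2d}$. This is precisely where the improved estimate $X^{4s-2d+\eps}$ of Section \ref{sec44}, obtained via the geometry-of-numbers argument for linearly independent difference vectors, becomes decisive: the coarser estimate used in [\ref{ref10012}] forced $n\geq 32d+17$, whereas the refinement here permits the relaxed requirement $s\geq 3d$, equivalently $n>24d$, while still yielding a strictly positive power-saving exponent $\delta'$.
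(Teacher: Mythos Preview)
Your outline is correct and follows essentially the same route as the paper's proof. The paper makes the intermediate steps a bit more explicit than your sketch: after bounding the mixed-monomial sum trivially it applies a pointwise Cauchy--Schwarz (rather than ``H\"older in $(\alpha_1,\alpha_2)$'') to reach $G_1\ll A^{N-n/2}X^{2n}W^{n/2}$ with $W(\alpha_1,\alpha_2)=\sum_{|l|\le 2A}\bigl|\sum_x e(\alpha_1 l x^d)\bigr|^2\bigl|\sum_y e(\alpha_2 l y^d)\bigr|^2$, then uses H\"older to split $\int_{\mathfrak m(X^\delta)^2}W^{n/8}\le\bigl(\int_{[0,1)^2}W^s\bigr)\cdot\sup_{\mathfrak m^2}W^{r/8}$, the first factor being bounded by Lemma~\ref{lemma for a new lattice counting problem} and the second by the Weyl-type estimate [\ref{ref2}, Lemma~4.3]; the bound $\int_0^1 G_2^{1/4}\,d\beta\ll A^{3N/4-k}$ is quoted directly from [\ref{ref10012}, Lemma~2.10] rather than obtained from an $L^1$ estimate.
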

\begin{proof}
It follows by Lemma $\ref{lemma 2.3}$ that 
\begin{equation}\label{beginproof}
\begin{aligned}
&\dsum_{\substack{\|\b\|_{\infty},\|\c\|_{\infty}\leq A\\ P([\b,\c])=0}}\bigl|\mathcal{I}_{[(\b,\c)]}(X,\mathfrak{m}(X^{\delta}))\bigr|^2\\
&\ll X^n\int_{\mathfrak{m}(X^{\delta})^2}G_1(\alpha_1,\alpha_2)^{1/4}d\alpha_1 d\alpha_2\int_0^1G_2(\beta)^{1/4}d\beta.
\end{aligned}
\end{equation}
Recall the definition of $G_1(\alpha_1,\alpha_2)$, that is
\begin{equation*}
G_1(\alpha_1,\alpha_2)=\dsum_{\substack{\|\h\|_{\infty}\leq 2A\\\h\in \Z^N}}|F_1(\alpha_1,\alpha_2,\h)|^2,
\end{equation*}
where 
\begin{equation*}
F_1(\alpha_1,\alpha_2,\h)=\dsum_{\substack{1\leq \x,\y\leq X\\ \x,\y\in \Z^n}}e(\alpha_1\langle \h,\nu_{d,n}(\x)-\alpha_2\langle\h, \nu_{d,n}(\y)\rangle).
\end{equation*}
On recalling the definition of the permutation $[\ \cdot\ ]$, we write $\h=[(\boldsymbol{l},\m)]$ with $\boldsymbol{l}\in \Z^n$ and $\m\in \Z^{N-n}.$ Then, by squaring out and changing the order of summations, one deduces by the triangle inequality that
\begin{equation*}
   G_1(\alpha_1,\alpha_2)\ll A^{N-n}\dsum_{\substack{1\leq \x^{(1)},\x^{(2)}\leq X\\ 1\leq \y^{(1)}, \y^{(2)}\leq X\\ \x^{(i)},\y^{(i)}\in \Z^n}}\biggl|\dsum_{\substack{\|\boldsymbol{l}\|_{\infty}\leq 2A\\\boldsymbol{l}\in \Z^n}}e(\Psi(\alpha_1,\alpha_2,\boldsymbol{l},\x^{(1)},\x^{(2)},\y^{(1)},\y^{(2)}))\biggr|,
\end{equation*}
where
\begin{equation*}
\begin{aligned}
&\Psi(\alpha_1,\alpha_2,\boldsymbol{l},\x^{(1)},\x^{(2)},\y^{(1)},\y^{(2)})\\
&=\alpha_1\langle\boldsymbol{l},v_d(\x^{(1)})-v_d(\x^{(2)})\rangle-\alpha_2\langle\boldsymbol{l},v_d(\y^{(1)})-v_d(\y^{(2)})\rangle.
\end{aligned}
\end{equation*}
By applying the Cauchy-Schwarz inequality and the triangle inequality, one sees that
\begin{equation}\label{G(alpha1,alpha2)}
    G_1(\alpha_1,\alpha_2)\ll A^{N-n}(X^{4n})^{1/2}(A^nH(\alpha_1,\alpha_2))^{1/2},
\end{equation}
where 
\begin{equation*}
H(\alpha_1,\alpha_2)=\dsum_{\boldsymbol{l}}\biggl|\dsum_{\substack{\x^{(1)},\x^{(2)}\\ \y^{(1)},\y^{(2)}}}e(\Psi(\alpha_1,\alpha_2,\boldsymbol{l},\x^{(1)},\x^{(2)},\y^{(1)},\y^{(2)}))\biggr|.
\end{equation*}
Note that
\begin{equation}\label{Halpha1alpha2}
H(\alpha_1,\alpha_2)=\biggl(\dsum_{-2A\leq l\leq 2A}\biggl|\dsum_{1\leq x\leq X}e(\alpha_1 lx^d)\biggr|^2\biggl|\dsum_{1\leq y\leq X}e(\alpha_2ly^d)\biggr|^2\biggr)^n.
\end{equation}
For simplicity, we temporarily write $H(\alpha_1,\alpha_2)=W(\alpha_1,\alpha_2)^n,$
where $$W(\alpha_1,\alpha_2)=\dsum_{-2A\leq l\leq 2A}\biggl|\dsum_{1\leq x\leq X}e(\alpha_1 lx^d)\biggr|^2\biggl|\dsum_{1\leq y\leq X}e(\alpha_2ly^d)\biggr|^2.$$  
By substituting $(\ref{Halpha1alpha2})$ into $(\ref{G(alpha1,alpha2)})$ and that into $(\ref{beginproof})$, we deduce that
\begin{equation}\label{4.54.5}
\begin{aligned}
&\dsum_{\substack{\|\b\|_{\infty},\|\c\|_{\infty}\leq A\\ P([\b,\c])=0}}\bigl|\mathcal{I}_{[(\b,\c)]}(X,\mathfrak{m}(X^{\delta}))\bigr|^2\\
&\ll A^{N/4-n/8}X^{3n/2}\int_{\mathfrak{m}(X^{\delta})^2}H(\alpha_1,\alpha_2)^{1/8}d\alpha_1 d\alpha_2 \cdot \int_0^1 G_2(\beta)^{1/4}d\beta\\
&\ll A^{N-k-n/8}X^{3n/2}\int_{\mathfrak{m}(X^{\delta})^2}H(\alpha_1,\alpha_2)^{1/8}d\alpha_1 d\alpha_2,
\end{aligned}
\end{equation}
where we have used [$\ref{ref10012}$, Lemma 2.10] with $l=1, B=0, A_1=A_2=2A$ and $\sigma=1/4$ that whenever $N\geq 200k(k-1)2^{k-1},$ one has
\begin{equation*}
    \int_0^1 G_2(\beta)^{1/4}d\beta\ll A^{3N/4-k}.
\end{equation*}
Furthermore, it follows by applying H\"{o}lder's inequality that the bound in the last expression in $(\ref{4.54.5})$ is 
\begin{equation}\label{bound for 4.5}
    \ll A^{N-k-n/8}X^{3n/2}\cdot \int_{[0,1)^2}W(\alpha_1,\alpha_2)^{s}d\alpha_1 d\alpha_2\cdot \sup_{(\alpha_1,\alpha_2)\in \mathfrak{m}(X^{\delta})^2}|W(\alpha_1,\alpha_2)|^{r/8}.
\end{equation}
By the trivial estimate $|\sum_{1\leq y\leq X}e(\alpha_2ly^d)|^2\ll X^2,$ we deduce by [$\ref{ref2}$, Lemma 4.3] that
\begin{equation}\label{pointwise estimate}
  \sup_{(\alpha_1,\alpha_2)\in \mathfrak{m}(X^{\delta})^2}  W(\alpha_1,\alpha_2)\ll AX^{4-\eta},
\end{equation}
with $\eta>0.$
Furthermore, since the mean value $\int_{[0,1)^2}W(\alpha_1,\alpha_2)^{s}d\alpha_1 d\alpha_2$ is bounded above by the number of integer solutions with $|a_i|\leq 2A$ and $|x_i|,|y_i|,|z_i|,|w_i|\leq X$ satisfying $(\ref{key system of equations})$, we find by Lemma $\ref{lemma for a new lattice counting problem}$ that whenever $1\leq X^{2d}\leq A\leq X^{s-d}$ one has
\begin{equation}\label{mean value of exponential sums}
\int_{[0,1)^2}W(\alpha_1,\alpha_2)^{s}d\alpha_1 d\alpha_2\ll A^{s-2+\epsilon}X^{4s-2d}.
\end{equation}
On substituting $(\ref{pointwise estimate})$ and $(\ref{mean value of exponential sums})$ into $(\ref{bound for 4.5})$, we conclude by $(\ref{4.54.5})$ that
\begin{equation*}
\begin{aligned}
\dsum_{\substack{\|\b\|_{\infty},\|\c\|_{\infty}\leq A\\ P([\b,\c])=0}}\bigl|\mathcal{I}_{[(\b,\c)]}(X,\mathfrak{m}(X^{\delta}))\bigr|^2\ll A^{N-k-2}X^{2n-2d-\delta'},
\end{aligned}
\end{equation*}
for some $\delta'>0.$ Since $A\leq X^{n-d}$, we complete the proof of Lemma $\ref{lem4.6}.$

\end{proof}

\section{Bounds for small moduli}\label{sec5}
In this section, we provide bounds for small moduli using the same argument as in [$\ref{ref10012}$, Lemma 3.8]. However, for the success of the argument with the relaxed bound on the number of variables $n$, we slightly modify the argument.

\begin{lem}\label{lemma 5.1}
With the same condition in Lemma $\ref{lem4.6}$, whenever $1\leq X^{2d}\leq A\leq X^{s-d}$ and $N\geq 200k(k-1)2^{k-1}$, there exists $\delta'>0$ such that we have 
\begin{equation*}    \dsum_{\substack{\|\b\|_{\infty},\|\c\|_{\infty}\leq A\\ P([\b,\c])=0}}\bigl|\mathcal{I}_{[(\b,\c)]}\left(X,\mathfrak{M}(X^{\delta})\setminus \mathfrak{M}(\log X)\right)\bigr|^2\ll A^{N-k-2}X^{2n-2d}(\log A)^{-\eta},
\end{equation*}
for some $\eta>0.$
\end{lem}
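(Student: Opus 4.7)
The plan is to follow the template of Lemma \ref{lem4.6} essentially verbatim, with the sole substantive modification appearing in the pointwise supremum of $W(\alpha_1,\alpha_2)$. Taking $\mathfrak{B} = \mathfrak{M}(X^\delta)\setminus \mathfrak{M}(\log X)$ and applying Lemma \ref{lemma 2.3}, I would carry out the same Cauchy--Schwarz manipulation that reduces $G_1(\alpha_1,\alpha_2)^{1/4}$ to $H(\alpha_1,\alpha_2)^{1/8} = W(\alpha_1,\alpha_2)^{n/8}$, and invoke [$\ref{ref10012}$, Lemma 2.10] once more to dispose of $\int_0^1 G_2(\beta)^{1/4}d\beta \ll A^{3N/4-k}$. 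This reduces matters to bounding
\[
\int_{\mathfrak{B}^2} W(\alpha_1,\alpha_2)^{n/8} \, d\alpha_1\, d\alpha_2 .
\]

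Next, using $n = 8s+r$, I would split $W^{n/8} = W^s \cdot W^{r/8}$ and dominate the first factor by its integral over $[0,1)^2$ and the second by its supremum over $\mathfrak{B}^2$, exactly as in (\ref{bound for 4.5}). The mean-value estimate $\int_{[0,1)^2} W^s \, d\alpha_1 d\alpha_2 \ll A^{s-2+\eps} X^{4s-2d}$ from Lemma \ref{lemma for a new lattice counting problem} is unchanged. The only genuinely new ingredient is a sharper pointwise bound on the annular arcs: for $(\alpha_1,\alpha_2) \in \mathfrak{B}^2$, at least one coordinate admits a reduced rational approximation whose denominator $q$ satisfies $\log X < q \le X^\delta$, so a standard major-arc/Weyl dichotomy (the same mechanism underlying [$\ref{ref2}$, Lemma 4.3] and its use in [$\ref{ref10012}$, Lemma 3.8]) should yield
\[
\sup_{(\alpha_1,\alpha_2)\in \mathfrak{B}^2} W(\alpha_1,\alpha_2) \ll AX^4 (\log X)^{-\sigma}
\]
for some $\sigma > 0$. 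Substituting these three bounds and exploiting the exponent identity $s - n/8 + r/8 = 0$ then produces $A^{N-k-2+\eps} X^{2n-2d}(\log X)^{-\sigma r/8}$, and the hypothesis $A \le X^{s-d}$ ensures $\log A \asymp \log X$, so the $\eps$-loss can be absorbed into a slightly weaker power of $\log A$ to deliver the required $(\log A)^{-\eta}$.

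The main obstacle is producing the pointwise saving for $W(\alpha_1,\alpha_2)$ uniformly on $\mathfrak{B}^2$. Because $W$ already carries the outer average over $|l| \le 2A$, one cannot simply apply a single-variable Weyl bound at a fixed $\alpha_i$; rather, one has to argue that for a sufficiently large proportion of the $l$'s the dilated argument $\alpha_i l$ still avoids the very small $\log X$-major arcs, so that the threshold $q > \log X$ converts into a power saving in $q$ after summation in $l$. This is precisely the step where the choice of the inner arcs at width $\log X$ (rather than at a power of $X$) must be honored; once the pointwise inequality is established, the remainder of the argument is mechanical bookkeeping that mirrors the final lines of the proof of Lemma \ref{lem4.6}.
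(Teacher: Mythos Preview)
Your proposal has a genuine gap at the very last step. The mean value bound $\int_{[0,1)^2} W^{s}\,d\alpha_1 d\alpha_2 \ll A^{s-2+\eps}X^{4s-2d}$ coming from Lemma~\ref{lemma for a new lattice counting problem} carries an unavoidable $X^{\eps}$ (equivalently $A^{\eps}$) loss, inherited from divisor estimates in the proof of that lemma. Meanwhile the only saving you extract on $\mathfrak{B}=\mathfrak{M}(X^{\delta})\setminus\mathfrak{M}(\log X)$ is a power of $\log X$ in the pointwise supremum of $W$. Combining these you obtain a bound of the shape $A^{N-k-2+\eps}X^{2n-2d}(\log X)^{-\sigma r/8}$, and the claim that ``the $\eps$-loss can be absorbed into a slightly weaker power of $\log A$'' is false: $A^{\eps}$ grows faster than any fixed power of $\log A$, so the resulting bound is weaker than the trivial one. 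In Lemma~\ref{lem4.6} this is harmless because the pointwise saving on $\mathfrak{m}(X^{\delta})$ is a genuine power $X^{-\eta}$, which swallows $X^{\eps}$; on the intermediate arcs that mechanism is simply not available.

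The paper therefore takes a different route and does \emph{not} use Lemma~\ref{lemma 2.3} or the mean value Lemma~\ref{lemma for a new lattice counting problem} here at all. It first dyadically decomposes $\mathfrak{M}(X^{\delta})\setminus\mathfrak{M}(\log X)$ into annuli $\mathfrak{C}=\mathfrak{M}(2Q)\setminus\mathfrak{M}(Q)$ and applies [\ref{ref10012}, Proposition~3.4], which bounds the sum in terms of $\int_{\mathfrak{C}}T(\alpha)^{\lfloor n/2\rfloor/8}d\alpha$ with the one-variable sum $T(\alpha)=\sum_{|b|\le A}\bigl|\sum_{x\le X}e(b\alpha x^d)\bigr|^2$. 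On each annulus one has the major-arc approximation $T(\alpha)\ll AX^{2}Q^{-2/d+\eps}$ (via [\ref{ref15}, Lemma~2.7 and Theorem~4.2]), and the crucial $A^{-2}$ saving now comes cleanly from the \emph{measure} $\textnormal{mes}(\mathfrak{C})\ll Q^{3}(AX^{d})^{-1}$ rather than from a mean value. This yields $A^{N-k-2}X^{2n-2d}Q^{6-n/(4d)+\eps}$, and since $n>24d$ the exponent of $Q$ is negative; summing over dyadic $Q\ge \log X$ then gives the desired $(\log X)^{-\eta}$ with no competing $X^{\eps}$ factor.
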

\begin{proof}
On observing that 
\begin{equation*}
\begin{aligned}
    &\mathfrak{M}(X^{\delta})\setminus \mathfrak{M}(\log X)\\
    &=(\mathfrak{M}(X^{\delta})\setminus \mathfrak{M}((\log X)^{9d}))\cup (\mathfrak{M}((\log X)^{9d})\setminus \mathfrak{M}(\log X) ) \\
   & =\biggl(\bigcup_{j=0}^{J_1}(\mathfrak{M}(2^{j+1}(\log X)^{9d})\setminus\mathfrak{M}(2^j(\log X)^{9d}))\biggr) \\
   &\ \ \ \ \ \ \ \ \ \ \ \ \ \ \ \ \ \ \ \ \ \ \ \ \ \ \ \ \ \ \ \ \ \ \ \ \ \ \ \ \cup \biggl(\bigcup_{j=0}^{J_2}(\mathfrak{M}(2^{j+1}\log X)\setminus\mathfrak{M}(2^j\log X))\biggr)
\end{aligned}
\end{equation*}
with $J_1=O(\log X)$ and $J_2=O(\log \log X),$ we deduce by the Cauchy-Schwarz inequality that
\begin{equation}\label{4.16}
    \begin{aligned}
     & \dsum_{\substack{\|\a\|_{\infty}\leq A\\ P(\a)=0}}\left|\mathcal{I}_{\a}(X,\mathfrak{M}(X^{\delta})\setminus \mathfrak{M}(\log X))\right|^2\\
     &\ll J_1\dsum_{j=0}^{J_1} \dsum_{\substack{\|\a\|_{\infty}\leq A\\ P(\a)=0}}\left|\mathcal{I}_{\a}(X,\mathfrak{M}(2^{j+1}(\log X)^{9d})\setminus\mathfrak{M}(2^j(\log X)^{9d}))\right|^2\\
     &\ \ \ \ \ \ \ \ \ \ \ \ \ \ \ \ \ \ \ \ \ \ \ \ \ + J_2\dsum_{j=0}^{J_2} \dsum_{\substack{\|\a\|_{\infty}\leq A\\ P(\a)=0}}\left|\mathcal{I}_{\a}(X,\mathfrak{M}(2^{j+1}\log X)\setminus\mathfrak{M}(2^j\log X))\right|^2.
    \end{aligned}
\end{equation}

Now, we analyze the mean value
$$ \dsum_{\substack{\|\a\|_{\infty}\leq A\\ P(\a)=0}}\left|\mathcal{I}_{\a}(X,\mathfrak{M}(2Q)\setminus\mathfrak{M}(Q))\right|^2,$$
with $\log X\leq Q\leq X^{\delta}.$ For simplicity, we temporarily write
\begin{equation}
    \mathfrak{C}=\mathfrak{M}(2Q)\setminus\mathfrak{M}(Q).
\end{equation}
Then, by [$\ref{ref10012}$, Proposition 3.4], we deduce that
\begin{equation}\label{4.18}
     \dsum_{\substack{\|\a\|_{\infty}\leq A\\ P(\a)=0}}\left|\mathcal{I}_{\a}(X,\mathfrak{C})\right|^2\ll A^{N-n/8-k}X^{7n/4}\dint_{\mathfrak{C}}T(\alpha_1)^{\lfloor n/2\rfloor/8}d\alpha_1\dint_{\mathfrak{C}}T(\alpha_2)^{\lceil n/2\rceil/8}d\alpha_2,
\end{equation}
where $T(\alpha)=\sum_{-A\leq b\leq A}\bigl|\sum_{1\leq x\leq X}e(b\alpha x^d)\bigr|^2$ with $\alpha\in \R.$

Meanwhile, whenever $\alpha\in\mathfrak{C},$ there exist $q\in \N$ and $a\in \Z$ with $(q,a)=1$ such that $Q\leq q\leq 2Q$ and
$$|\alpha-a/q|\leq 2Q(AX^d)^{-1}.$$
Also, for $b\in \Z\setminus \{0\}$ with $|b|\leq A$, if we write $l=(q,b)$, $\widetilde{q}=q/l$ and $\widetilde{b}=b/l$, we have
\begin{equation*}
    \left|b\alpha-\frac{a\widetilde{b}}{\widetilde{q}}\right|\leq \frac{2Qb}{AX^d}\leq \frac{2Q}{X^d}.
\end{equation*}
Hence, since $\widetilde{q}\leq 2Q$, it follows by [$\ref{ref15}$, Lemma 2.7] that
\begin{equation}\label{4.19}
    \dsum_{1\leq x\leq X}e(b\alpha x^d)-(\widetilde{q})^{-1}S(\widetilde{q},a\widetilde{b})v(\beta)=O((2Q)^2),
\end{equation}
where $S(q,a)=\dsum_{n=1}^qe(an^d/q)$ and $v(\beta)=\dint_0^Xe(\beta \gamma^d)d\gamma$ with $\beta=b\alpha-\frac{a\widetilde{b}}{\widetilde{q}}.$ Thus, when $\alpha\in \mathfrak{C}$, we deduce from ($\ref{4.19}$) that 
\begin{equation}\label{4.20}
\begin{aligned}
    T(\alpha)&\ll X^2+\dsum_{l|q}\dsum_{\substack{|b|\leq A\\ (q,b)=l}}\biggl|\dsum_{1\leq x\leq X}e(b\alpha x^d)\biggr|^2\\
    &\ll X^2 +\dsum_{l|q}\dsum_{\substack{|b|\leq A\\ (q,b)=l}}
    \left(|((\widetilde{q})^{-1}S(\widetilde{q},a\widetilde{b})v(\beta)|^2+(2Q)^4\right).
\end{aligned}
\end{equation}

By [$\ref{ref15}$, Theorem 4.2], we have a bound $S(q/l,a(b/l))\ll (q/l)^{1-1/d}$
and a trivial bound $v(\beta)\leq X$. Hence, on substituting these estimates into $(\ref{4.20})$, we find that
\begin{equation}\label{4.21}
    T(\alpha)\ll X^2+\dsum_{l|q}AX^2q^{-2/d}l^{2/d-1}+AQ^4.
\end{equation}
The second term $\sum_{l|q}AX^2q^{-2/d}l^{2/d-1}$ is bounded above by $AX^2q^{-2/d}\sum_{l|q}1,$ and by the standard divisor estimate, this bound is $O(AX^2q^{-2/d+\epsilon})$. Recall that $\alpha\in \mathfrak{C}$, and thus we have the bound $Q\leq q\leq 2Q$ with $\log X\leq Q\leq X^{\delta}.$ Furthermore, we recall the hypothesis $X^{2d}\leq A$ in the statement in this lemma. Hence, it follows from $(\ref{4.21})$ that 
\begin{equation}\label{4.22}
    T(\alpha)\ll AX^2Q^{-2/d+\epsilon}.
\end{equation}
Therefore, on substituting ($\ref{4.22}$) into ($\ref{4.18}$), we obtain the bound
\begin{equation*}
     \dsum_{\substack{\|\a\|_{\infty}\leq A\\ P(\a)=0}}\left|\mathcal{I}_{\a}(X,\mathfrak{C})\right|^2\ll A^{N-n/8-k}X^{7n/4}\textrm{mes}(\mathfrak{C})^2(AX^2Q^{-2/d+\epsilon})^{n/8}.
\end{equation*}
On noting that $\textrm{mes}(\mathfrak{C})\ll Q^3(AX^d)^{-1}$, we conclude that
\begin{equation}\label{4.23}
      \dsum_{\substack{\|\a\|_{\infty}\leq A\\ P(\a)=0}}\left|\mathcal{I}_{\a}(X,\mathfrak{C})\right|^2\ll A^{N-k-2}X^{2n-2d}Q^{6-n/(4d)+\epsilon}.
\end{equation}
We find from ($\ref{4.23}$) together with the hypothesis $n=8s+r > 24d$ in this lemma that
$$  \dsum_{\substack{\|\a\|_{\infty}\leq A\\ P(\a)=0}}\left|\mathcal{I}_{\a}(X,\mathfrak{C})\right|^2\ll A^{N-k-2}X^{2n-2d}Q^{-1/(4d)+\epsilon}.$$
Hence, on recalling that $J_1=O(\log X)$ and $J_2=O(\log\log X)$, it follows from ($\ref{4.16}$) that
\begin{equation*}
    \dsum_{\substack{\|\a\|_{\infty}\leq A\\ P(\a)=0}}\left|\mathcal{I}_{\a}(X,\mathfrak{M}(X^{\delta})\setminus \mathfrak{M}(\log X))\right|^2\ll A^{N-k-2}X^{2n-2d}(\log X)^{-\eta},
\end{equation*}
for some $\eta>0.$
On noting that $A\leq X^{n-d}$ in the statement of this lemma, we complete the proof of Lemma $\ref{lemma 5.1}.$

\end{proof}

\section{Proof of Theorem 1.2}
In this section, we provide the proof of Theorem $\ref{thm2.2}$. To do this, we require two auxiliary lemmas  recorded in [\ref{ref10012}, Lemma 4.1 and 4.2].
In advance of the statement of the first lemma of these, it is convenient to define the exponential sum $S_{\a}(q)$ with $\a\in \Z^N$, $q\in \N$ by
\begin{equation*}
    S_{\a}(q):=S_{\a}(q;n)=q^{-n}\dsum_{\substack{1\leq b\leq q\\ (q,b)=1}}\dsum_{\substack{1\leq \r\leq q\\\r\in \Z^n}}e\left(\frac{b}{q}f_{\a}(\r)\right).
\end{equation*}
\begin{lem}\label{lem6.1}
Let $n$ and $d$ be natural numbers. Suppose that $A,B,C$ are sufficiently large positive numbers with $B<C.$ Suppose that $P\in \Z[\x]$ is a non-singular form in $N_{d,n}$ variables of degree $k\geq 2.$ Then, for any set $\mathcal{C}\subseteq [B,C]\cap \Z$, whenever $N\geq 200k(k-1)2^{k-1}$  we have 
\begin{equation*}
    \dsum_{\substack{\|\a\|_{\infty}\leq A\\ P(\a)=0}}\biggl|\dsum_{q\in \mathcal{C}}S_{\a}(q)\biggr|^2\ll A^{N-k}\biggl(\dsum_{q\in \mathcal{C}}q^{1+\epsilon}(q^{-1}+q^{-4/d})^{n/16}\biggr)^2.
\end{equation*}
\end{lem}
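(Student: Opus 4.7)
The plan is to apply Cauchy--Schwarz on the sum over $q \in \mathcal{C}$ in order to reduce the claim to a pointwise-in-$q$ second moment bound on $S_\a(q)$ averaged over $\a$, and then to establish this second moment bound by opening the square and invoking the Weyl-type mean value for $P$ available from [\ref{ref10012}, Lemma 2.10]. Introducing the weight $w_q := q^{1+\epsilon}(q^{-1}+q^{-4/d})^{n/16}$ and applying Cauchy--Schwarz gives
\begin{equation*}
\biggl|\sum_{q\in \mathcal{C}}S_\a(q)\biggr|^2 \le \Bigl(\sum_{q \in \mathcal{C}}w_q\Bigr)\Bigl(\sum_{q \in \mathcal{C}}w_q^{-1}|S_\a(q)|^2\Bigr).
\end{equation*}
Summing this over $\a$ with $\|\a\|_\infty \le A$ and $P(\a)=0$, and interchanging the order of summation, the lemma will follow once one has the pointwise estimate
\begin{equation*}
\sum_{\substack{\|\a\|_\infty \le A\\ P(\a)=0}}|S_\a(q)|^2 \ll A^{N-k}\,q^{2+\epsilon}\bigl(q^{-1}+q^{-4/d}\bigr)^{n/8},
\end{equation*}
holding uniformly in $q \in \N$ under the hypothesis $N \ge 200k(k-1)2^{k-1}$.

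To obtain this pointwise bound, the idea is to open the square
\begin{equation*}
|S_\a(q)|^2 = q^{-2n}\sum_{\substack{1\le b_1,b_2\le q\\(b_1b_2,q)=1}}\sum_{\substack{1\le \r_1,\r_2\le q\\\r_i\in \Z^n}}e\!\left(\frac{\langle \a,\,b_1\nu_{d,n}(\r_1)-b_2\nu_{d,n}(\r_2)\rangle}{q}\right),
\end{equation*}
swap the inner and outer summations, and remove the constraint $P(\a)=0$ via the Fourier identity $\mathbf{1}_{P(\a)=0}=\int_0^1 e(\gamma P(\a))\,d\gamma$. Writing $\bfh = b_1\nu_{d,n}(\r_1)-b_2\nu_{d,n}(\r_2)$, the sum over $\a$ becomes
\begin{equation*}
\int_0^1 \sum_{\|\a\|_\infty\le A}e\bigl(\gamma P(\a) + q^{-1}\langle\a,\bfh\rangle\bigr)\,d\gamma,
\end{equation*}
which we estimate through the mean value for $P$ from [\ref{ref10012}, Lemma 2.10] (exactly as in the proof of Lemma \ref{lem4.6}, whence the condition $N \ge 200k(k-1)2^{k-1}$). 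Reinserting the sum over $(b_i, \r_i)$ and exploiting the fact that $\bfh$ factors coordinate-wise through the $n$ variables of $f_\a$, the contribution reduces to a product of $n$ monomial exponential sum averages, each of which admits the dichotomy $q^{-1}+q^{-4/d}$ between residues with $b_1 r_{1,i}^d \equiv b_2 r_{2,i}^d \pmod q$ (yielding $q^{-1}$ by counting) and residues on which the Weyl bound $|q^{-1}S(q,a)|\ll q^{-1/d+\epsilon}$ of [\ref{ref15}, Theorem 4.2] applies.

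The main obstacle is to keep the $q$-dependence sufficiently sharp throughout, and in particular to preserve the exponent $n/8$ on the dichotomy factor after the Cauchy--Schwarz manipulations necessary to disentangle the constraint $P(\a)=0$ from the coordinate-wise Weyl savings. The exponent $n/8$ (rather than a naive $n$) reflects the fact that, in order to simultaneously capture the saving $A^{N-k}$ in the $P$-direction and the cancellation $q^{-4/d}$ per monomial coordinate, one must perform several unfoldings of the square and thereby lose a power of two in each round. The quantitative threshold $N\ge 200k(k-1)2^{k-1}$ is precisely the one tailored to make [\ref{ref10012}, Lemma 2.10] applicable at the exponent $\sigma = 1/4$ after these unfoldings, and verifying that each step retains the conditions of that lemma is the most delicate part of the argument.
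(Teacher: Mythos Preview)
The paper does not give an independent proof of this lemma; it simply cites [\ref{ref10012}, Lemma 4.1], so there is no in-paper argument to compare against directly.

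Your outline is a plausible reconstruction of how such a result is established. The Cauchy--Schwarz reduction with weight $w_q = q^{1+\epsilon}(q^{-1}+q^{-4/d})^{n/16}$ to a pointwise-in-$q$ second moment is correct (the arithmetic checks: the target pointwise bound is exactly $\sum_\a |S_\a(q)|^2 \ll A^{N-k} w_q^2$, which feeds back to give $(\sum_q w_q)^2$), and the subsequent strategy of opening the square, detecting $P(\a)=0$ by Fourier, and invoking [\ref{ref10012}, Lemma 2.10] at $\sigma=1/4$ is in line with the machinery used throughout both papers. That said, what you have written is a plan rather than a proof: the passage explaining why the coordinate-wise Weyl savings survive the unfolding and why the exponent becomes $n/8$ is a description of the difficulty rather than an execution. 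To complete it you would need to exhibit the precise chain of Cauchy--Schwarz and H\"older applications and verify the hypotheses of [\ref{ref10012}, Lemma 2.10] at each stage; consulting the original proof of [\ref{ref10012}, Lemma 4.1] is the natural next step.
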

\begin{proof}
    See [\ref{ref10012}, Lemma 4.1].
\end{proof}

In advance of the statement of the second auxiliary lemma, we define 
\begin{equation}\label{defnJ}
   \mathfrak{J}_{\a}(w)=X^{n-d}A^{-1} \dint_{|\beta|\leq w}\dint_{[0,1]^n}e(\beta A^{-1}f_{\a}(\boldsymbol{\gamma}))d\boldsymbol{\gamma} d\beta,
\end{equation}
with $\a\in \Z^N.$
Furthermore, we recall the definition ($\ref{defnJ*}$) of $\mathfrak{J}_{\a}^*.$
\begin{lem}\label{lemma6.2}
Let $n$ and $d$ be natural numbers with $n\geq 8(d+1).$ Suppose that $P\in \Z[\x]$ is a non-singular form in $N_{d,n}$ variables of degree $k\geq 2.$ Then, whenever $N\geq 200k(k-1)2^{k-1}$, we have
    \begin{equation*}
        \dsum_{\substack{\|\a\|_{\infty}\leq A\\ P(\a)=0}}\left|\mathfrak{J}_{\a}^*-\mathfrak{J}_{\a}(w)\right|^2\ll A^{N-k-2}X^{2n-2d}(w^{2-n/(2(d+1))}+\zeta).
    \end{equation*}
\end{lem}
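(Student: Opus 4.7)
The plan is to adapt the proof of [\ref{ref10012}, Lemma 4.2] with the slightly modified choice $\zeta = w^{-4-1/(8d)}$, verifying that each step carries through.

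First, I would use Fourier inversion on the even function $\mathfrak{w}_\zeta$ to rewrite
\[
\mathfrak{J}_\a^* \;=\; A^{-1}X^{n-d}\zeta^{-1}\int_\R \mathfrak{w}_\zeta(\beta)\, K_\a(\beta)\,d\beta, \qquad K_\a(\beta) := \int_{[0,1]^n} e\bigl(\beta A^{-1} f_\a(\boldsymbol{\gamma})\bigr)\,d\boldsymbol{\gamma}.
\]
Since $\mathfrak{J}_\a(w)$ is the same expression with the weight $\zeta^{-1}\mathfrak{w}_\zeta$ replaced by the indicator of $[-w,w]$, the difference reduces to
\[
\mathfrak{J}_\a^* - \mathfrak{J}_\a(w) \;=\; A^{-1}X^{n-d}\int_\R h(\beta)\, K_\a(\beta)\,d\beta, \qquad h(\beta) := \zeta^{-1}\mathfrak{w}_\zeta(\beta) - \mathbf{1}_{[-w,w]}(\beta).
\]

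Next I would split the $\beta$-integral into the short range $|\beta|\le w$ and the long range $|\beta| > w$. On the short range, the Taylor expansion $(\sin x/x)^2 = 1 + O(x^2)$ combined with $\zeta w = w^{-3-1/(8d)} = o(1)$ yields $|h(\beta)| \ll (\pi \zeta \beta)^2$, which after squaring, integrating, and summing over $\a$ produces the contribution of order $\zeta$ in the final bound. On the long range, one has the decay $|h(\beta)| = |\zeta^{-1}\mathfrak{w}_\zeta(\beta)| \ll (\zeta\beta)^{-2}$, and the requisite savings must come from averaged cancellation in $K_\a(\beta)$. Squaring the difference and summing over $\a$ with $P(\a)=0$ and $\|\a\|_\infty \le A$, then applying Cauchy--Schwarz in $\beta$, reduces matters to mean-square bounds on $K_\a(\beta)$ that decay polynomially in $|\beta|$. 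The hypothesis $n \ge 8(d+1)$ is exactly what produces the exponent $w^{2-n/(2(d+1))}$, while $N \ge 200k(k-1)2^{k-1}$ governs the acceptable loss from the constraint $P(\a) = 0$, via the $P$-singular machinery already developed in [\ref{ref10012}].

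The main point requiring attention is that $\zeta$ has been decreased relative to the value used in [\ref{ref10012}], so one must check that no estimate is harmed. Because $\zeta$ appears in the conclusion directly as one of the two error terms, and enters the short-range analysis only through the prefactor $(\zeta w)^2$, which can only shrink under the new choice, all of the relevant bounds in [\ref{ref10012}, Lemma 4.2] in fact improve rather than deteriorate. I would therefore conclude by invoking that argument directly, with the substitution $\zeta \mapsto w^{-4-1/(8d)}$ tracked throughout.
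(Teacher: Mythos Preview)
Your proposal is correct and takes the same approach as the paper: both simply observe that the only change from [\ref{ref10012}, Lemma 4.2] is the smaller value of $\zeta$, and that this does not harm the argument there. One minor point worth tightening: your long-range estimate $|h(\beta)|\ll(\zeta\beta)^{-2}$ would actually \emph{worsen} under a smaller $\zeta$, so the reason the tail contribution $w^{2-n/(2(d+1))}$ is unaffected is that it comes from a $\zeta$-independent bound (for instance $|h|\le 1$, since $\zeta^{-1}\mathfrak{w}_\zeta\le 1$) together with the averaged decay of $K_\a$, rather than from the $(\zeta\beta)^{-2}$ decay you cite.
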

\begin{proof}
The only difference between Lemma \ref{lemma6.2} and [\ref{ref10012}, Lemma 4.2] is the choice of $\zeta.$ One readily sees that the choice of $\zeta=w^{-4-1/(8d)}$ does not harm the argument in [\ref{ref10012}, Lemma 4.2], and thus  [\ref{ref10012}, Lemma 4.2] still holds with $\zeta=w^{-4-1/(8d)}$.
\end{proof}

The proof of Theorem $\ref{thm2.2}$ follows the same strategy in the proof of [\ref{ref10012}, Theorem 1.1] with $\zeta=w^{-4-1/(8d)}$. To keep the paper self-contained, we restate the proof below under the condition $\zeta=w^{-4-1/(8d)}$.
\begin{proof}[Proof of Theorem 1.2]
Recall the definition $(\ref{I_a(X,B)})$ and $(\ref{4.24.24.2})$ of $\mathcal{I}_{\a}(X,\mathfrak{B})$ and $\mathfrak{M}$. Then, we find that
\begin{equation*}
\begin{aligned}
\mathcal{I}_{\a}(X,\mathfrak{M})&=\dint_{\mathfrak{M}}\dsum_{\substack{1\leq \x\leq X\\ \x\in \Z^{n}}}e(\alpha f_{\a}(\x)) d\alpha  \\
&=\dsum_{1\leq q\leq w}\dsum_{\substack{1\leq b\leq q\\(q,b)=1}}\dint_{|\alpha-b/q|\leq \frac{w}{AX^d}}\dsum_{\substack{1\leq \x\leq X}}e(\alpha f_{\a}(\x)) d\alpha.
\end{aligned} 
\end{equation*}
Recall the definition ($\ref{defnJ}$) of $\mathfrak{J}_{\a}(w).$ By applying classical treatments in major arcs [$\ref{ref8}$, Lemma 5.1] and writing $\beta=\alpha-b/q,$ we readily find that 
\begin{equation}\label{6.1}
\begin{aligned}
        &\mathcal{I}_{\a}(X,\mathfrak{M})=\dsum_{1\leq q\leq w}S_{\a}(q)\mathfrak{J}_a(w)+O(A^{-1}X^{n-d-1}w^5),
        \end{aligned}
\end{equation}
where 
$$S_{\a}(q)=\dsum_{\substack{1\leq b\leq q\\ (q,b)=1}}q^{-n}\dsum_{1\leq \r\leq q}e\left(\frac{b}{q} f_{\a}(\r)\right).$$
Meanwhile, recall the definition $(\ref{def2.3})$ and ($\ref{6.26.26.2}$) of $W$ and $\mathfrak{S}_{\a}^*$, and note from the classical treatment that 
$$\mathfrak{S}_{\a}^*=\displaystyle\prod_{p\leq w}\biggl(\dsum_{\substack{0\leq h\leq \log_pw}}S_{\a}(p^h)\biggr).$$
If we define a set 
\begin{equation*}
    \mathcal{Q}=\{q\in (w,W]:\ \text{for all primes}\ p,\ p^h\|q\Rightarrow p^h\leq w\}
\end{equation*}
and define $$\mathcal{E}_{\a}=\dsum_{q\in \mathcal{Q}}S_{\a}(q),$$ we find from the multiplicativity of $S_{\a}(q)$ that
\begin{equation}\label{6.2}
    \dsum_{1\leq q\leq w}S_{\a}(q)=\mathfrak{S}_{\a}^*-\mathcal{E}_{\a}.
\end{equation}

Meanwhile, note that $$\mathcal{I}_{\a}(X)=\mathcal{I}_{\a}(X,\mathfrak{M})+\mathcal{I}_{\a}(X,\mathfrak{m}).$$ Then, on recalling the definition of $\mathfrak{J}_{\a}^*$, we deduce from $(\ref{6.1})$ and $(\ref{6.2})$ together with applications of elementary inequality $(a+b)^2\leq 2a^2+2b^2$ that
\begin{equation}\label{6.3}
\begin{aligned}
      &\dsum_{\substack{\|\a\|_{\infty}\leq A\\P(\a)=0}}\left|\mathcal{I}_{\a}(X)-\mathfrak{S}_{\a}^*\mathfrak{J}_{\a}^*\right|^2\\
      &\ll \dsum_{\substack{\|\a\|_{\infty}\leq A\\P(\a)=0}}\left|\mathcal{I}_{\a}(X,\mathfrak{M})-\mathfrak{S}_{\a}^*\mathfrak{J}_{\a}^*\right|^2+\dsum_{\substack{\|\a\|_{\infty}\leq A\\P(\a)=0}}\left|\mathcal{I}_{\a}(X,\mathfrak{m})\right|^2\\
      &\ll \Sigma_1+\Sigma_2+\dsum_{\substack{\|\a\|_{\infty}\leq A\\P(\a)=0}}\left|\mathcal{I}_{\a}(X,\mathfrak{m})\right|^2+O\biggl(A^{-2}X^{2n-2d-2}w^{10}\dsum_{\substack{\|\a\|_{\infty}\leq A\\P(\a)=0}}1\biggr),
\end{aligned}
\end{equation}
where 
$$\Sigma_1=\dsum_{\substack{\|\a\|_{\infty}\leq A\\P(\a)=0}}\left|\mathcal{E}_{\a}\mathfrak{J}_{\a}^*\right|^2\ \text{and}\ \Sigma_2=\dsum_{\substack{\|\a\|_{\infty}\leq A\\P(\a)=0}}\left|(\mathfrak{S}_{\a}^*-\mathcal{E}_{\a})(\mathfrak{J}_{\a}^*-\mathfrak{J}_{\a}(w))\right|^2.$$

First, we estimate the third and fourth terms of the last expression in ($\ref{6.3}$). Since we have $$\#\{\a\in \Z^N:\ \|\a\|_{\infty}\leq A,\ P(\a)=0\}\ll A^{N-k},$$ it follows by Lemma $\ref{lem4.6}$ that 
\begin{equation}\label{6.161616}
\begin{aligned}
&\dsum_{\substack{\|\a\|_{\infty}\leq A\\P(\a)=0}}\left|\mathcal{I}_{\a}(X,\mathfrak{m})\right|^2+O\biggl(A^{-2}X^{2n-2d-2}w^{10}\dsum_{\substack{\|\a\|_{\infty}\leq A\\P(\a)=0}}1\biggr)\\
&\ll A^{N-k-2}X^{2n-2d}(\log A)^{-\delta},    
\end{aligned}
\end{equation}
with some $\delta>0.$ 

Next, we turn to estimate the first term of the last expression in ($\ref{6.3}$). From the trivial bound, we have 
\begin{equation}\label{6.1616}
\left|\mathfrak{J}_{\a}^*\right|^2\ll X^{2(n-d)}\zeta^{-2}A^{-2}=X^{2(n-d)}w^{8+1/(4d)}A^{-2}.
\end{equation}
By Lemma $\ref{lem6.1}$ with $k=2,$ $B=w,C=W$ and $\mathcal{C}=\mathcal{Q}$, we have
\begin{equation}\label{6.17}
    \begin{aligned}
        \dsum_{\substack{\|\a\|_{\infty}\leq A\\P(\a)=0}}\left|\mathcal{E}_{\a}\right|^2&\ll A^{N-k}\biggl(\dsum_{q\geq w}q^{1+\epsilon}(q^{-1}+q^{-4/d})^{n/16}\biggr)^2\\
        &\ll A^{N-k}\biggl(\dsum_{q\geq w}\left(q^{1-n/16+\epsilon}+q^{1-n/(4d)+\epsilon}\right)\biggr)^2.
    \end{aligned}
\end{equation}
Meanwhile, from the hypotheses $n=8s+r$ with $s\in \N$ and $1\leq r\leq 8,$ $s\geq 3d$ and $X^{2d}\leq A\leq X^{s-d}$ in the statement of Theorem $\ref{thm2.2},$ we see that $n>24d.$ Hence, it follows from $(\ref{6.17})$ together with the hypothesis $d\geq 4$ that
\begin{equation}\label{6.18}
\begin{aligned}
    \dsum_{\substack{\|\a\|_{\infty}\leq A\\P(\a)=0}}\left|\mathcal{E}_{\a}\right|^2&\ll  A^{N-k}\left(w^{2-3d/2}+w^{-4-1/(4d)}\right)^2\ll A^{N-k}\cdot w^{-8-1/(2d)}.
\end{aligned}
\end{equation}
Therefore, combining $(\ref{6.1616})$ and $(\ref{6.18})$, we conclude that
\begin{equation}\label{5.202020}
\Sigma_1=    \dsum_{\substack{\|\a\|_{\infty}\leq A\\P(\a)=0}}\left|\mathcal{E}_{\a}\mathfrak{J}_{\a}^*\right|^2\ll A^{N-k-2}X^{2n-2d}w^{-1/(4d)}.
\end{equation}

Lastly, it remains to estimate the second term of the last expression in ($\ref{6.3}$). From the trivial bound, we have
$$\left|\mathfrak{S}_{\a}^*-\mathcal{E}_{\a}\right|^2= \biggl|\dsum_{1\leq q\leq w}S_{\a}(q)\biggr|^2\leq w^4.$$
Hence, we deduce by applying Lemma $\ref{lemma6.2}$ with $n> 24d$ and $d\geq 4$ that
\begin{equation}\label{5.3030}
    \Sigma_2=\dsum_{\substack{\|\a\|_{\infty}\leq A\\P(\a)=0}}\left|(\mathfrak{S}_{\a}^*-\mathcal{E}_{\a})(\mathfrak{J}_{\a}^*-\mathfrak{J}_{\a}(w))\right|^2\ll A^{N-k-2}\cdot X^{2n-2d}\cdot w^{-1/(8d)}.
\end{equation}
 
 Then, on recalling the definition of $w$ and substituting $(\ref{6.161616})$, $(\ref{5.202020})$ and $(\ref{5.3030})$ into the last expression in ($\ref{6.3}$), one concludes that 
\begin{equation*}
    \dsum_{\substack{\|\a\|_{\infty}\leq A\\P(\a)=0}}\left|\mathcal{I}_{\a}(X)-\mathfrak{S}_{\a}^*\mathfrak{J}_{\a}^*\right|^2\ll A^{N-k-2}X^{2n-2d}(\log A)^{-\delta},
\end{equation*}
for some $\delta$ with $0<\delta<1.$ This completes the proof of Theorem $\ref{thm2.2}.$
\end{proof}


\end{document}